\newtheorem{theorem}{Theorem}
\newtheorem{corollary}[theorem]{Corollary}
\newtheorem{proposition}[theorem]{Proposition}
\theoremstyle{definition}
\newtheorem{remark}[theorem]{Remark}
\newcommand{\tdiscr}{\tau_{\mathrm{discr}}}
\newcommand{\tres}{\tau_{\mathrm{res}}}
\begin{document}

\title{Embedded techniques for choosing the parameter \\
in Tikhonov regularization}
\author{S. Gazzola, P. Novati, M.~R. Russo\\
Department of Mathematics\\
University of Padua, Italy}
\maketitle

\begin{abstract}
\noindent This paper introduces a new strategy for setting the
regularization parameter when solving large-scale discrete ill-posed linear
problems by means of the Arnoldi-Tikhonov method. This new rule is
essentially based on the discrepancy principle, although no initial
knowledge of the norm of the error that affects the right-hand side is
assumed; an increasingly more accurate approximation of this quantity is
recovered during the Arnoldi algorithm. Some theoretical estimates are
derived in order to motivate our approach. Many numerical experiments,
performed on classical test problems as well as image deblurring are
presented.\newline
\end{abstract}

\section{Introduction}

Let us consider a linear discrete ill-posed problem of the form
\begin{equation}
Ax=b,  \label{sys}
\end{equation}%
where $A\in \mathbb{R}^{N\times N}$ is severely ill-conditioned and may be
of huge size. These sort of systems typically arise from the discretization
of Fredholm integral equations of the first kind with compact kernel (for an
exhaustive background on these class of problems, cf. \cite[Chapter 1]{PCH}%
). The right-hand side $b$ is assumed to be affected by an unknown additive
error $e$ coming from the discretization process or measurements
inaccuracies, i.e.,
\begin{equation}
b=b^{ex}+e,  \label{available}
\end{equation}%
where $b^{ex}$ denotes the unknown exact right-hand side. We assume that the
unperturbed system $Ax=b^{ex}$ is consistent and we denote its solution by $%
x^{ex}$; the system (\ref{sys}) is not guaranteed to be consistent.
Referring to the Singular Value Decomposition (SVD) of the matrix $A$,
\begin{equation}
A=U\Sigma V^{T},  \label{SVD}
\end{equation}%
we furthermore assume that the singular values $\sigma _{i}$ quickly decay
toward zero with no evident gap between two consecutive ones.

Because of the ill-conditioning of $A$ and the presence of noise in $b$, in
order to find a meaningful approximation of $x^{ex}$ we have to substitute
the available system (\ref{sys}) with a nearby problem having better
numerical properties: this process is called regularization. One of the most
well-known and well-established regularization technique is Tikhonov method
that, in its most general form, can be written as
\begin{equation}
\min_{x\in \mathbb{R}^{N}}\left\{ \Vert Ax-b\Vert^{2}+\lambda \Vert
L(x-x_{0})\Vert^{2}\right\} ,  \label{GenTikh}
\end{equation}%
where $L\in \mathbb{R}^{P\times N}$ is the regularization matrix, $\lambda
>0 $ is the regularization parameter and $x_{0}\in \mathbb{R}^{N}$ is an
initial guess for the solution. We denote the solution of the problem (\ref%
{GenTikh}) by $x_{\lambda }$. When $L=I_{N}$ (the identity matrix of order $%
N $) and $x_{0}=0$, the problem is said to be in standard form. In this
paper the norm $\Vert \cdot \Vert $ is always the Euclidean one. The use of
a regularization matrix different from the identity may improve the quality
of the reconstruction obtained by (\ref{GenTikh}), especially when one wants
to enhance some known features of the solution. In many situations, $L$ is
taken as a scaled finite differences approximation of a derivative operator
(cf. Section \ref{sect:NumExp}).

A proper choice of the regularization parameter is crucial, since it
specifies the amount of regularization to be imposed. Many techniques have
been developed in order to set the regularization parameter in (\ref{GenTikh}%
), we cite \cite{Luk,ParCh} for a review of the classical ones along with
some more recent ones. Here, we are concerned with the discrepancy
principle, that suggests to set the parameter $\lambda $ such that the
nonlinear equation
\begin{equation*}
\Vert b-Ax_{\lambda }\Vert =\eta \Vert e\Vert ,\quad \eta \gtrsim 1,
\end{equation*}%
is satisfied. Of course this strategy can be applied only if a fairly
accurate approximation of the quantity $\Vert e\Vert $ is known.

Denoting by $x_{m,\lambda }$ the approximation of $x_{\lambda }$ computed at
the $m$-th step of a certain iterative method applied to (\ref{GenTikh}),
and by $\phi _{m}(\lambda )=\Vert b-Ax_{m,\lambda }\Vert $ the corresponding
discrepancy, each nonlinear solver for the equation%
\begin{equation}
\phi _{m}(\lambda )=\eta \Vert e\Vert ,  \label{de}
\end{equation}%
leads to a parameter choice rule associated with the iterative process. The
basic idea of this paper, in which we assume $\Vert e\Vert $ to be unknown,
is to consider (if possible) the approximation $\phi _{k}(0)\approx \Vert
e\Vert $, where $k<m$, and then to solve
\begin{equation}
\phi _{m}(\lambda )=\eta \phi _{k}(0),  \label{ad}
\end{equation}%
with respect to $\lambda $. The use of (\ref{ad}) as a parameter choice rule
is motivated by the fact that many iterative solvers for $Ax=b$ produce
approximations $x_{m}=x_{m,0}$ whose corresponding residual $\Vert
b-Ax_{m}\Vert $ tends to stagnate around $\Vert e\Vert $. In other words,
the information about the noise level can be recovered during the iterative
process. Moreover, in many situations, the computational effort of the
algorithm that 
delivers $x_{m,\lambda }$ can be exploited for forming $x_{m,0}$ (or
viceversa). For this reason, we may refer to any iterative process which
simultaneously uses $x_{m}$ to approximate $\Vert e\Vert $ and solves (\ref%
{ad}) to compute $x_{m,\lambda }$ as an embedded approach.

In this paper we are mainly interested in solving (\ref{GenTikh}) by means
of the so-called Arnoldi-Tikhonov methods (originally introduced in \cite%
{ATfirst} for the standard form regularization), which are based on the orthogonal
projection of (\ref{GenTikh}) onto the Krylov subspaces $\mathcal{K}%
_{m}(A,b)=\mathrm{span}\{b,Ab,\dots ,A^{m-1}b\}$ of increasing dimensions.
As well known, these methods typically show a fast superlinear convergence
when applied to discrete ill-posed problems, and hence they are particularly
attractive for large scale problems. Dealing with this kind of methods,
efficient algorithms based on the solution of (\ref{de}) have been
considered in \cite{LR} and \cite{zerof}. More recently, in \cite{auto} a
very simple strategy for solving (\ref{de}), based on the linearization of $%
\phi _{m}(\lambda )$, has been presented. In this paper we extend the latter
approach by considering the approximation $\phi _{m-1}(0)\approx \Vert
e\Vert $ where, in this setting, $\phi _{m-1}(0)$ is just the norm of the
GMRES residual computed at the previous iteration.

The paper is organized as follows. In Section \ref{sec:GAT} we survey the
basic features of the Arnoldi-Tikhonov methods.
In Section 3 we review the linearization technique described in \cite{auto},
and in Section 4 we explain the parameter choice rule based on an embedded
approach, also giving a theoretical justification in the Arnoldi-Tikhonov
case. In the first part of Section 5 we write down the algorithm, in order
to summarize the new method and to better describe some practical details;
the remaining parts are devoted to display the results of some of the
performed numerical tests. In the Appendix, we prove a theorem used
in Section 4.

\section{The Arnoldi-Tikhonov Method}

\label{sec:GAT}

The Arnoldi-Tikhonov (AT) method was first proposed in \cite{ATfirst} with
the basic aims of reducing the problem (\ref{GenTikh}) (in the particular
case $L=I_{N}$ and $x_{0}=0$) to a problem of much smaller dimension and to
avoid the use of $A^{T}$ as in Lanczos type methods (see e.g. \cite{OS}).
Then, in \cite{auto,HostReich,NR} the method has been extended to work with
a general $L\in \mathbb{R}^{P\times N}$ and $x_{0}$. Assuming $x_{0}=0$
(this assumption will hold throughout the paper), we consider the Krylov
subspaces
\begin{equation}
\mathcal{K}_{m}(A,b)=\mathrm{span}\{b,Ab,\dots ,A^{m-1}b\},\;m\geq 1.
\label{KryAb}
\end{equation}%
In order to construct an orthonormal basis for this Krylov subspace we can
use the Arnoldi algorithm \cite{SaadBook}, which leads to the associated
decomposition
\begin{eqnarray}
AW_{m} &=&W_{m}H_{m}+h_{m+1,m}w_{m+1}e_{m}^{T}  \label{ArnoldiFOM} \\
{} &=&W_{m+1}\bar{H}_{m},  \label{Arnoldi}
\end{eqnarray}%
where $W_{m+1}=[w_{1},...,w_{m+1}]\in \mathbb{R}^{N\times (m+1)}$ has
orthonormal columns that span the Krylov subspace $\mathcal{K}_{m+1}(A,b)$,
and $w_{1}=b/\left\Vert b\right\Vert $. The matrices $H_{m}\in \mathbb{R}%
^{m\times m}$ and $\bar{H}_{m}\in \mathbb{R}^{(m+1)\times m}$ are upper
Hessenberg.

The AT method searches for approximations $x_{m,\lambda }$ of the solution
of problem (\ref{GenTikh}) belonging to $\mathcal{K}_{m}(A,b)$. Therefore,
replacing $x=W_{m}y$, $y\in \mathbb{R}^{m}$, into (\ref{GenTikh}), yields
the reduced minimization problem
\begin{equation}
y_{m,\lambda }=\arg \min_{y\in \mathbb{R}^{m}}\left\{ \left\Vert \bar{H}%
_{m}y-c\right\Vert ^{2}+\lambda \left\Vert LW_{m}y\right\Vert ^{2}\right\} ,
\label{TikhPr}
\end{equation}%
where $c=\Vert b\Vert e_{1}$, being $e_{1}$ the first vector of the
canonical basis of $\mathbb{R}^{m+1}$. The above problem is equivalent to
\begin{equation}
y_{m,\lambda }=\arg \min_{y\in \mathbb{R}^{m}}\left\Vert \left(
\begin{array}{c}
\bar{H}_{m} \\
\sqrt{\lambda }LW_{m}%
\end{array}%
\right) y-\left(
\begin{array}{c}
c \\
0%
\end{array}%
\right) \right\Vert ^{2}.  \label{GAT-LS}
\end{equation}%
Obviously, $y_{m,\lambda }$ is also the solution of the normal equation%
\begin{equation}
(\bar{H}_{m}^{T}\bar{H}_{m}+\lambda W_{m}^{T}L^{T}LW_{m})y_{m,\lambda }=\bar{%
H}_{m}^{T}c.  \label{GAT_NE}
\end{equation}%
We remark that, when dealing with standard form problems ($L=I_{N}$ and $%
x_{0}=0$), the Arnoldi-Tikhonov formulation considerably simplifies thanks
again to the orthogonality of the columns of $W_{m}$ and, instead of (\ref%
{GAT-LS}), we can consider
\begin{equation}
y_{m,\lambda }=\arg \min_{y\in \mathbb{R}^{m}}\left\Vert \left(
\begin{array}{c}
\bar{H}_{m} \\
\sqrt{\lambda }I_{m}%
\end{array}%
\right) y-\left(
\begin{array}{c}
c \\
0%
\end{array}%
\right) \right\Vert ^{2}.  \label{GAT-LS-ST}
\end{equation}%
In (\ref{GAT-LS-ST}), the dimension of the problem is fully reduced because
at each iteration we deal with a $(2m+1)\times m$ matrix. On the other side,
considering (\ref{GAT-LS}), there is still track of the original dimensions
of the problem. Anyway, since the AT method can typically recover a meaningful
approximation of the exact solution after just a few iterations of the
Arnoldi algorithm have been performed, the computational cost is still low.
Assuming that $P\leq N$ in (\ref{GenTikh}) and defining a new
matrix $L$ obtained by appending $N-P$ zero rows to the original one, we can
also consider the following new formulation
\begin{equation}
y_{m}=\arg \min_{y\in \mathbb{R}^{m}}\left\Vert \left(
\begin{array}{c}
\bar{H}_{m} \\
\sqrt{\lambda }L_{m}%
\end{array}%
\right) y-\left(
\begin{array}{c}
c \\
0%
\end{array}%
\right) \right\Vert ^{2},\quad \text{where}\quad L_{m}=W_{m}^{T}LW_{m}.
\label{GAT-LS-red}
\end{equation}%
The above problem is not equivalent to (\ref{GAT-LS}) anymore, but can
be justified by the fact that $L_{m}$ is the orthogonal projection of
$L$ onto $\mathcal{K}_{m}(A,b)$, and hence, in some sense, $L_{m}$ inherits
the properties of $L$ (see \cite{new} for a discussion).

\section{The parameter choice strategy}

As said in the Introduction, the discrepancy principle is a well-known and
quite successful parameter selection strategy that, when applied to Tikhonov
regularization method (\ref{GenTikh}), prescribes to choose the
regularization parameter $\lambda >0$ such that $\Vert Ax_{\lambda }-b\Vert
=\eta \Vert e\Vert$, where the parameter $\eta$ is greater than $1$, though
very close to it.

An algorithm exploiting the discrepancy principle has been first considered
for the Arnoldi-Tikhonov method in \cite{LR}, where the authors suggest
to solve, at each iteration $m$, the nonlinear equation
\begin{equation}
\phi _{m}(\lambda ):=\Vert \bar{H}_{m}y_{m,\lambda }-c\Vert =\eta \Vert
e\Vert ,  \label{phi_m}
\end{equation}%
employing a special zero-finder described in \cite{zerof}. In order to
decide when to stop the iterations, a preliminary condition should be
satisfied and then some adjustments should be made.

Considering the normal equations associated to (\ref{GAT-LS-red}), we write
\begin{equation}
\phi _{m}(\lambda )=\Vert c-\bar{H}_{m}(\bar{H}_{m}^{T}\bar{H}_{m}+\lambda
L_{m}^{T}L_{m})^{-1}\bar{H}_{m}^{T}c\Vert .  \label{phi_mNE}
\end{equation}%
Denoting by $r_{m}=b-Ax_{m}$ the GMRES residual, we have that $\phi
_{m}(0)=\left\Vert r_{m}\right\Vert $. In this setting, in \cite{auto} the
authors solve (\ref{phi_m}) after considering the linear approximation
\begin{equation}  \label{LinApprox}
\phi _{m}(\lambda )\approx \phi _{m}(0)+\lambda \beta _{m},
\end{equation}
where, at each iteration, the scalar $\beta _{m}$ is defined by the ratio
\begin{equation}
\beta _{m}=\frac{\phi _{m}(\lambda _{m-1})-\phi _{m}(0)}{\lambda _{m-1}}.
\label{beta}
\end{equation}%
In (\ref{beta}), $\phi _{m}(\lambda _{m-1})$ is obtained by solving the $m$%
-dimensional problem (\ref{GAT-LS-red}) using the parameter $\lambda=\lambda
_{m-1}$, which is computed at the previous step.

Therefore, to select $\lambda=\lambda _{m}$ for the next step of the
Arnoldi-Tikhonov algorithm, we can approximate $\phi_m(\lambda_m)$ by (\ref%
{LinApprox}) and impose
\begin{equation}
\phi _{m}(\lambda _{m})=\eta \Vert e\Vert.  \label{phi}
\end{equation}%
Substituting in the linear approximation of $\phi_m(\lambda_m)$ the
expression derived in (\ref{beta}), and using the condition (\ref{phi}), we
obtain
\begin{equation}
\lambda _{m}=\frac{\eta \Vert e\Vert -\phi _{m}(0)}{\phi _{m}(\lambda
_{m-1})-\phi _{m}(0)}\lambda _{m-1}\,.  \label{lambdaNEW}
\end{equation}%
When $\phi _{m}(0)>\eta \Vert e\Vert $, formula (\ref{lambdaNEW}) produces a
negative value for $\lambda _{m}$. Thus, in order to keep $\lambda _{m}>0$,
we consider the relation
\begin{equation}
\lambda _{m}=\left\vert \frac{\eta \Vert e\Vert -\phi _{m}(0)}{\phi
_{m}(\lambda _{m-1})-\phi _{m}(0)}\right\vert \lambda _{m-1}.
\label{lambdaABS}
\end{equation}%
In this procedure, $\lambda _{0}$ must be set to an initial value by the
user, but the numerical experiments show that this strategy is very robust
with respect to this choice (typically one may set $\lambda _{0}=1$).

\begin{remark}
\label{rem:GMRESres} We remark that the use of the absolute value in (\ref%
{lambdaABS}) can be avoided by forcing initially $\lambda =0$, i.e., working
with the GMRES, and then switching to the AT method equipped with (\ref%
{lambdaNEW}) as soon as $\phi _{m}(0)<\eta \Vert e\Vert $.
\end{remark}

In \cite{auto} this scheme has been called secant-update method , since at each
iteration of the Arnoldi algorithm it basically performs just one step of a
secant-like zero finder applied to the equation $\phi _{m}(\lambda )=\eta
\Vert e\Vert $. Numerically, formula (\ref{lambdaABS}) is very stable, in
the sense that after the discrepancy principle is satisfied, $\lambda _{m}$
is almost constant for growing values of $m$.

\section{Exploiting the GMRES residual}

We now try to generalize the secant-update approach, dropping the hypothesis
that the quantity $\Vert e\Vert $ is available. In this situation, one
typically employs other well-known techniques, such as the L-curve criterion
or the Generalized Cross Validation (GCV); both have already been used in
connection with the Arnoldi-Tikhonov or Lanczos-hybrid methods \cite%
{ATfirst,wGCV,KO,new}. The strategy we are going to describe is to be
considered different since we still want to apply the discrepancy principle,
starting with no information on $\Vert e\Vert $ and trying to recover an
estimate of it during the iterative process.

Our basic assumption is that, after just a few iterations of the Arnoldi
algorithm, the norm of the residual associated to the GMRES method lies
around the threshold $\Vert e\Vert $ and, despite being slightly decreasing,
stabilizes during the following iterations (cf. Figure \ref{f04}). This
motivates the use of the following strategy to choose the regularization
parameter at the $m$-th iteration
\begin{equation}
\lambda _{m}=\frac{\eta \phi _{m-1}(0)-\phi _{m}(0)}{\phi _{m}(\lambda
_{m-1})-\phi _{m}(0)}\lambda _{m-1},\quad \eta >1,  \label{NEWupdate}
\end{equation}%
where we have replaced the quantity $\Vert e\Vert $ in (\ref{lambdaABS}) by $%
\phi _{m-1}(0)=\Vert r_{m-1}\Vert $. We remark that, from a theoretical
point of view, the formula (\ref{NEWupdate}) cannot produce negative values
since $\phi_m(0)=\Vert r_{m}\Vert \leq \Vert r_{m-1}\Vert=\phi_{m-1}(0) $
and $\phi _{m}(\lambda )$ is an increasing function with respect to $\lambda
$. In what follows we provide a theoretical justification for this approach,
giving also some numerical experiments using test problems taken from \cite%
{H1}; in the first subsection we focus on the case $b=b^{ex}$, while in the
second subsection we treat the case $b=b^{ex}+e$.

\subsection{The unperturbed problem}

Thanks to a number of results in literature (see e.g. \cite{Moret}), we know
that the GMRES exhibits superlinear convergence when solving problems in
which the singular values rapidly decay to 0. Indeed, in this situation, the
Krylov subspaces tend to become $A$-invariant after few iterations. In general,
the fast convergence of a Krylov subspace method applied to an ill-posed
system (\ref{sys}) can be explained by monitoring the behavior of the
sequence $\left\{ h_{m+1,m}\right\} _{m}$. In fact, it is well known that the
GMRES residual is related with the FOM residual $\rho _{m}$ as follows \cite[%
Chapter 6]{SaadBook}%
\begin{equation}
\left\Vert r_{m}\right\Vert \leq h_{m+1,m}\left\vert
e_{m}^{T}H_{m}^{-1}c\right\vert =\left\Vert \rho _{m}\right\Vert ,
\label{GMRES-FOM}
\end{equation}%
where $H_{m}$ is as in (\ref{ArnoldiFOM}) and $c=\Vert b^{ex}\Vert e_{1}\in
\mathbb{R}^{m}$. Thanks to the relation (see \cite{Me})%
\begin{equation*}
\left\Vert r_{m}\right\Vert ^{2}=\frac{1}{\frac{1}{\left\Vert \rho
_{m}\right\Vert ^{2}}+\frac{1}{\left\Vert r_{m-1}\right\Vert ^{2}}},
\end{equation*}%
which expresses the well known peak-plateau phenomenon, we can conclude that
when the FOM solutions do not explode the GMRES residuals decay as the
quantities $h_{m+1,m}$. The following theorem (proved in the Appendix) gives
us an estimate for the quantities $\left\{ h_{m+1,m}\right\} _{m}$ whenever
we work with the exact right hand side $b^{ex}$, and $A$ is assumed to be
severely ill-conditioned, that is, with singular values which decay
exponentially (cf. \cite{Hofm}). In Figure \ref{f01} we report a couple of
numerical experiments.

\begin{theorem}
\label{th1} Assume that $A$ has full rank with singular values of the type $%
\sigma _{j}=O(e^{-\alpha j})$ ($\alpha >0$) and that $b^{ex}$ satisfies the
Discrete Picard Condition, that is, $\left\vert u_{j}^{T}b^{ex}\right\vert
\sim \sigma _{j}$, where $u_{j}$ is the $j$-column of the matrix $U$ of (\ref%
{SVD}). Then if $b^{ex}$ is the starting vector of the Arnoldi process we
have
\begin{equation}
h_{m+1,m}=O\left( m^{3/2}\sigma _{m}\right) .  \label{rr3}
\end{equation}
\end{theorem}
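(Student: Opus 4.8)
The plan is to rewrite $h_{m+1,m}$ as a ratio of best monic--polynomial residuals and then to bound the numerator from above and the denominator from below, using the singular value decomposition (\ref{SVD}) together with the Discrete Picard Condition.

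\emph{Reduction.} Since $\mathcal{K}_k(A,b^{ex})=\{p(A)b^{ex}:\deg p\le k-1\}$, I would factor the Krylov matrix $[\,b^{ex},Ab^{ex},\dots,A^{m}b^{ex}\,]=W_{m+1}R_{m+1}$ in QR form and compare it with the Arnoldi relation (\ref{Arnoldi}) (the classical identity linking the Arnoldi Hessenberg matrix to the triangular factor of the Krylov matrix). This gives $h_{m+1,m}=\rho_{m+1}/\rho_{m}$, where $\rho_k:=(R_k)_{kk}$ is the distance of $A^{k-1}b^{ex}$ from $\mathcal{K}_{k-1}(A,b^{ex})$, i.e.\ $\rho_k=\min\{\|\pi(A)b^{ex}\|:\pi\ \text{monic},\ \deg\pi=k-1\}$. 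Thus it suffices to bound $\rho_{m+1}$ from above and $\rho_m$ from below. Expanding $b^{ex}=\sum_j\beta_j u_j$ in the left singular vectors, the Picard condition gives $|\beta_j|\sim\sigma_j=O(e^{-\alpha j})$.

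\emph{The two estimates.} For the upper bound I would test $\rho_{m+1}$ against the explicit monic polynomial $\pi_m(\lambda)=\prod_{i=1}^m(\lambda-\sigma_i)$, which annihilates the first $m$ singular components; since $|\pi_m(\sigma_j)|\le\prod_{i\le m}\sigma_i$ for $j>m$, while the Picard condition and the geometric decay yield $\sum_{j>m}\beta_j^2\le C\sum_{j>m}\sigma_j^2=O(\sigma_{m+1}^2)$, one gets $\rho_{m+1}\le c_1\,\sigma_{m+1}\prod_{i\le m}\sigma_i$ up to a factor polynomial in $m$. For the lower bound on $\rho_m$ I would use that a monic polynomial of degree $m-1$ cannot be uniformly small on $m$ distinct nodes: evaluating the leading--coefficient (divided--difference) identity at the nodes $\sigma_1,\dots,\sigma_m$ gives $1=\sum_{j\le m}\pi(\sigma_j)\prod_{i\le m,\,i\ne j}(\sigma_j-\sigma_i)^{-1}$, hence $\max_{j\le m}|\pi(\sigma_j)|\ge\big(\sum_{j\le m}\prod_{i\ne j}|\sigma_j-\sigma_i|^{-1}\big)^{-1}$; combining this with $|\beta_j|\gtrsim\sigma_j$ and estimating each node product via the exponential decay ($\prod_{i\ne j}|\sigma_j-\sigma_i|$ being comparable, up to $m$--independent constants, to $\sigma_j^{\,m-j}\prod_{i<j}\sigma_i$) bounds $\rho_m$ below by $c_2\,\sigma_m\prod_{i\le m-1}\sigma_i$, again up to a factor polynomial in $m$. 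Forming the ratio, the products $\prod\sigma_i$ cancel up to the single surviving factor $\sigma_m$ (and $\sigma_{m+1}\le\sigma_m$), the exponential terms cancel exactly, and collecting the polynomial factors leaves $h_{m+1,m}=O(m^{3/2}\sigma_m)$.

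\emph{Main obstacle.} The delicate point is the use of the singular system for a possibly non--normal $A$: the identification $\|\pi(A)b^{ex}\|^2=\sum_j\beta_j^2\,\pi(\sigma_j)^2$, on which both estimates rest, is exact only when $A$ is normal (for symmetric $A$ one has $u_j=\pm v_j$ and it holds). In the general case one must instead control $\|\pi(A)\|$ on a region enclosing the spectrum, or argue that the quantities involved behave as in the normal case; the experiments of Figure \ref{f01} indicate the resulting estimate is sharp. A second, purely technical, issue is keeping an honest account of the polynomial--in--$m$ factors coming from the tail sum, from the node products in the Lagrange bound, and from the cancellation of the $\prod\sigma_i$, so as to land on the exponent $3/2$ rather than a cruder power of $m$.
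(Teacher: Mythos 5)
Your reduction $h_{m+1,m}=\rho_{m+1}/\rho_m$, with $\rho_k$ the minimal norm of $\pi(A)b^{ex}$ over monic polynomials of degree $k-1$, is correct and is a genuinely different starting point from the paper. But the gap you flag in your last paragraph is not a technicality to be patched later; it is fatal to the argument as proposed. Both of your estimates rest on $\|\pi(A)b^{ex}\|^2=\sum_j\beta_j^2\pi(\sigma_j)^2$ with $b^{ex}=\sum_j\beta_ju_j$, i.e.\ on evaluating a polynomial in $A$ at the \emph{singular values}. Since $Av_j=\sigma_ju_j$ rather than $Au_j=\sigma_ju_j$, this identity holds only for normal (essentially symmetric positive semidefinite) $A$, whereas Theorem \ref{th1} is stated, and is needed in this paper, precisely for the nonsymmetric matrices to which the Arnoldi--Tikhonov method is addressed. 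For severely ill-conditioned non-normal $A$ the eigenvalues and the singular values can be completely unrelated (a nilpotent perturbation has all eigenvalues at zero while its singular values need not be small), and the Discrete Picard Condition is a statement about the singular vectors, not the eigenvectors, so there is no functional-calculus bridge from the hypothesis to $\|\pi(A)b^{ex}\|$. Replacing the sum by a bound on $\|\pi(A)\|$ over a region containing the spectrum does not rescue the argument, because the relevant region is again governed by the eigenvalues/pseudospectrum, about which the hypotheses say nothing.

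There are two further problems even in the normal case. First, your lower bound on $\rho_m$ via the divided-difference identity needs the nodes $\sigma_1,\dots,\sigma_m$ to be well separated, so that $\prod_{i\neq j}|\sigma_j-\sigma_i|$ is comparable to $\sigma_j^{\,m-j}\prod_{i<j}\sigma_i$; the hypothesis $\sigma_j=O(e^{-\alpha j})$ is only an upper bound and permits arbitrarily clustered (even coincident) singular values, in which case the node products degenerate and the bound gives nothing. For the same reason $|\beta_j|\gtrsim\sigma_j$ does not yield a usable lower bound on $\rho_m$ in terms of $\sigma_m$, since $\sigma_m$ itself has no lower bound from the hypotheses. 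Second, your heuristic bookkeeping actually lands on $h_{m+1,m}=O(\sigma_{m+1})$ with no polynomial factor, which should itself be a warning that the factor $m^{3/2}$ in the statement is not coming from where you are looking. The paper's route avoids all of this: it never evaluates a polynomial in $A$. It splits $A=A_m+\Delta_m$ with $A_m=U_m\Sigma_mV_m^T$ the truncated SVD, writes $h_{m+1,m}=w_{m+1}^TAw_m=O(\sigma_{m+1})+w_{m+1}^TU_m\Sigma_mV_m^Tw_m$, and then uses an Appendix proposition --- the Picard condition propagates to all normalized Krylov vectors $A^kb^{ex}/\|A^kb^{ex}\|$, so that $U_m=\widetilde{V}_mC_m^{-1}+F_m$ with $\|F_m\Sigma_m\|=O(m^{3/2}\sigma_m)$ as in (\ref{rr2}) --- together with $w_{m+1}^T\widetilde{V}_m=0$ to kill the leading term. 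The $m^{3/2}$ arises there from a Cramer's-rule estimate on $C_m^{-1}\Sigma_m$ and the summation of the exponential tail, not from Lagrange node products. If you want to salvage your ratio formulation, you would still need an analogue of that proposition to control $\rho_{m+1}$ and $\rho_m$ through the singular system rather than the spectrum.
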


\begin{figure}[tbp]
\centering
\begin{tabular}{cc}
\includegraphics[width=0.45\textwidth]{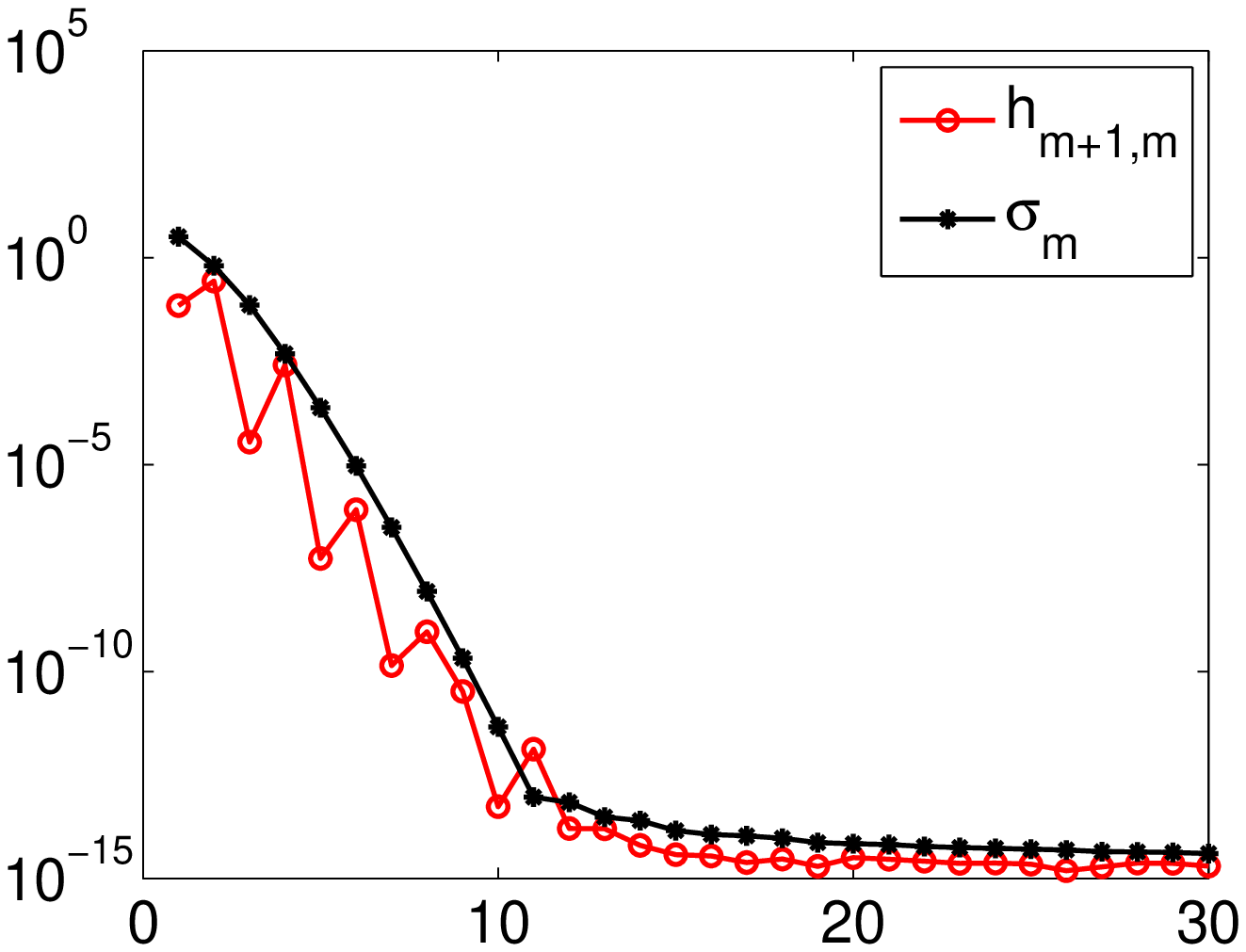} & %
\includegraphics[width=0.45\textwidth]{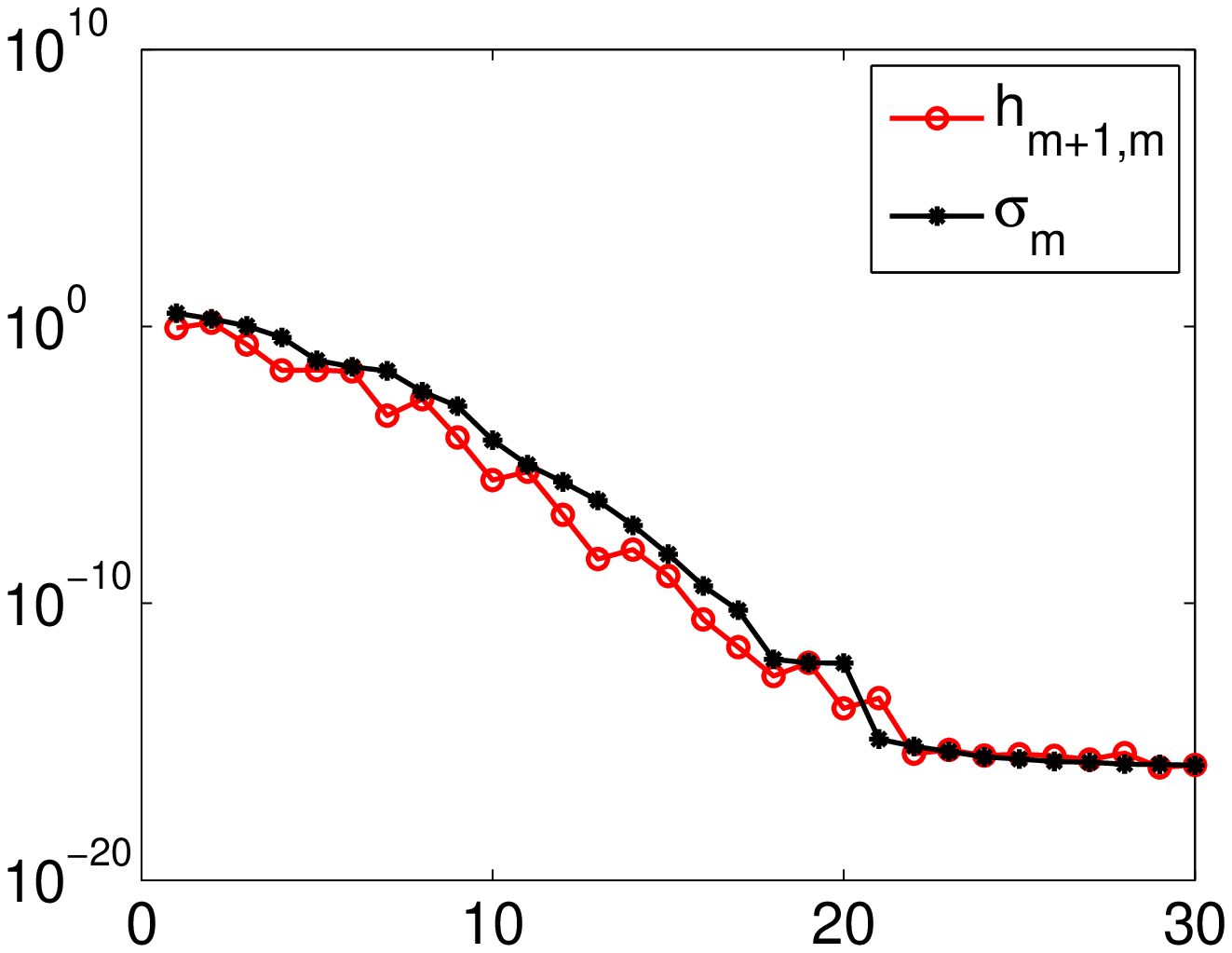}%
\end{tabular}%
\caption{Behavior of the sequences $\left\{ h_{m+1,m}\right\} _{m}$ and $%
\left\{ \protect\sigma _{m}\right\} _{m}$ for the test problems \texttt{baart%
} (left) and \texttt{shaw } (right) from \protect\cite{H1}.}
\label{f01}
\end{figure}

The following result follows immediately from Theorem \ref{th1} and (\ref%
{GMRES-FOM}).

\begin{corollary}
\label{cor1}Under the hypothesis of Theorem \ref{th1}, assume that there
exist $M$ such that for $m\leq N$%
\begin{equation}
\left\vert e_{m}^{T}H_{m}^{-1}c\right\vert \leq M,  \label{bm2}
\end{equation}%
where $c=\Vert b\Vert e_{1}\in \mathbb{R}^{m}$. Then the GMRES residuals are
of the type%
\begin{equation}
\left\Vert r_{m}^{ex}\right\Vert =O\left( m^{3/2}\sigma _{m}\right) .
\label{bg}
\end{equation}
\end{corollary}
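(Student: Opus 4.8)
The plan is to chain together the two ingredients cited in the statement, so the argument is essentially a substitution. First I would recall the mixed GMRES--FOM bound (\ref{GMRES-FOM}), namely $\Vert r_m\Vert \leq h_{m+1,m}\vert e_m^T H_m^{-1}c\vert = \Vert \rho_m\Vert$, which holds as long as the Arnoldi process started from $b^{ex}$ does not break down before step $m$; here $c=\Vert b^{ex}\Vert e_1\in\mathbb{R}^m$. This already isolates the two quantities to be controlled: the subdiagonal Arnoldi entry $h_{m+1,m}$ and the scalar $e_m^T H_m^{-1}c$ that governs the size of the $m$-th FOM iterate.

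Next I would invoke Theorem \ref{th1}. Since $A$ has full rank with $\sigma_j=O(e^{-\alpha j})$ and $b^{ex}$ satisfies the Discrete Picard Condition, taking $b^{ex}$ as the starting vector of the Arnoldi process yields $h_{m+1,m}=O(m^{3/2}\sigma_m)$. Inserting this into (\ref{GMRES-FOM}) and using the standing hypothesis (\ref{bm2}), that $\vert e_m^T H_m^{-1}c\vert\leq M$ uniformly for $m\leq N$, one obtains
\[
\Vert r_m^{ex}\Vert \;\leq\; h_{m+1,m}\,\vert e_m^T H_m^{-1}c\vert \;\leq\; M\cdot O(m^{3/2}\sigma_m)\;=\;O(m^{3/2}\sigma_m),
\]
which is exactly (\ref{bg}); the constant hidden in the $O$-notation is the product of $M$ with the constant furnished by Theorem \ref{th1}.

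There is no real obstacle here, only one point worth a comment: the role of the hypothesis (\ref{bm2}). It is the quantitative counterpart of the informal remark preceding Theorem \ref{th1} that \emph{the FOM solutions do not explode}; indeed $h_{m+1,m}\vert e_m^T H_m^{-1}c\vert=\Vert\rho_m\Vert$, so (\ref{bm2}) controls how close $H_m$ comes to singularity relative to the projected right-hand side. Without such a bound the peak--plateau identity $\Vert r_m\Vert^2=(\Vert\rho_m\Vert^{-2}+\Vert r_{m-1}\Vert^{-2})^{-1}$ only gives $\Vert r_m\Vert\leq\Vert r_{m-1}\Vert$ and no rate; with it, the GMRES residual inherits the exponential decay of $\sigma_m$ up to the mild polynomial factor $m^{3/2}$. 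Accordingly I would keep the proof to these few lines.
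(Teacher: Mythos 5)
Your argument is exactly the one the paper intends: Corollary \ref{cor1} is stated as following ``immediately from Theorem \ref{th1} and (\ref{GMRES-FOM})'', and your chaining of $\left\Vert r_m^{ex}\right\Vert \leq h_{m+1,m}\left\vert e_m^T H_m^{-1}c\right\vert \leq M\cdot O(m^{3/2}\sigma_m)$ is precisely that substitution. Your closing remark correctly identifies (\ref{bm2}) as the quantitative form of the ``FOM solutions do not explode'' condition, which matches the paper's surrounding discussion.
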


Employing the SVD of the matrix $H_{m}$, that is, $H_{m}=U_{m}^{(m)}\Sigma
_{m}^{(m)}\left( V_{m}^{(m)}\right) ^{T}$, $\Sigma _{m}^{(m)}=\mathrm{diag}%
(\sigma _{1}^{(m)},...,\sigma _{m}^{(m)})$, we have%
\begin{equation*}
H_{m}^{-1}c=V_{m}^{(m)}(\Sigma _{m}^{(m)})^{-1}U_{m}^{(m)T}c,
\end{equation*}%
so that (\ref{bm2}) is satisfied as soon as the Discrete Picard Condition is
inherited in some way by the projected problem. It is known that if $%
\widetilde{\sigma }_{j}^{(m)}$, $j=1,...,m$, are the singular values
approximations arising from the SVD of $\bar{H}_{m}$, then $\widetilde{%
\sigma }_{m}^{(m)}\geq \widetilde{\sigma }_{m+1}^{(m+1)}\geq \sigma _{N}>0$
(cf. \cite{ATfirst}). Since $h_{m+1,m}$ goes rapidly to 0, we also have that
after a few iterations $\sigma _{j}^{(m)}\approx \widetilde{\sigma }%
_{j}^{(m)}$ so that we can expect that $\sigma _{m}^{(m)}\geq \sigma _{N}$.
In general, however, we do not have guarantees that $M$ is small, so that (%
\ref{bg}) may be quantitatively not much useful. Everything is closely
related to the SVD approximation that we can achieve with the Arnoldi
algorithm (see \cite{new} for some theoretical results). It is known that if
the matrix $A$ is highly nonsymmetric, then the SVD approximation may be
poor so that the Discrete Picard Condition may be badly inherited by the
projected problem. Anyway, in Figure \ref{f02} we report the FOM residual
history for some test problems which confirm the behavior described by (\ref%
{bg}).

\begin{figure}[]
\centering
\includegraphics[width=0.50\textwidth]{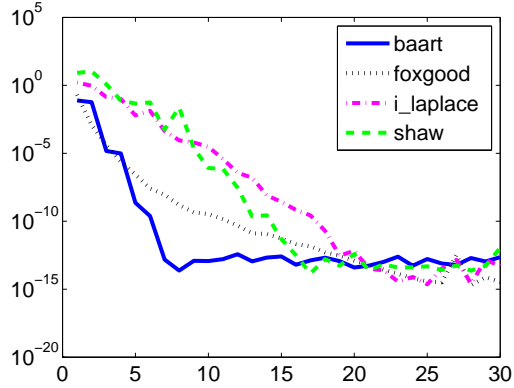}
\caption{FOM residual history for some common test problems taken from
\protect\cite{H1}.}
\label{f02}
\end{figure}

\subsection{The perturbed problem}

When the right-hand side of (\ref{sys}) is affected by noise, we can give
the following preliminary estimate for the norm of the GMRES residual.

\begin{proposition}
\label{prop1}Let $b=b^{ex}+e$ and let $r_{m}^{ex}=p_{m}^{ex}(A)b^{ex}$ be
the residual of the GMRES applied to the system $Ax=b^{ex}$. Assume that for
$m\geq m^{\ast }$, $\left\Vert p_{m}^{ex}(A)\right\Vert \leq \eta ^{\ast }$.
Then the $m$-th residual of the GMRES applied to $Ax=b$ satisfies%
\begin{equation*}
\left\Vert r_{m}\right\Vert \leq \eta \Vert e\Vert ,
\end{equation*}%
where%
\begin{equation*}
\eta =\frac{\left\Vert r_{m^{\ast }}^{ex}\right\Vert }{\Vert e\Vert }+\eta
^{\ast }.
\end{equation*}
\end{proposition}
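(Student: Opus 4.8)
The plan is to combine the optimality of the GMRES residual at step $m$ with the observation that a residual polynomial produced by GMRES applied to the noise-free system at the earlier step $m^{\ast}$ is still an admissible candidate at step $m$, and then to split off the noise contribution via the triangle inequality. First I would record the polynomial description of the residuals: writing $\mathcal{P}_m^0=\{p:\deg p\le m,\ p(0)=1\}$, the $m$-th GMRES iterate for $Ax=b$ lies in $\mathcal{K}_m(A,b)$, so its residual has the form $r_m=p_m(A)b$ with $p_m\in\mathcal{P}_m^0$, and $p_m$ is a minimizer of $\|p(A)b\|$ over $\mathcal{P}_m^0$; in particular $\|r_m\|\le\|q(A)b\|$ for every $q\in\mathcal{P}_m^0$. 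This remains valid even though $Ax=b$ need not be consistent, since GMRES minimizes the residual norm in any case. The same description gives $r_{m^{\ast}}^{ex}=p_{m^{\ast}}^{ex}(A)b^{ex}$ with $p_{m^{\ast}}^{ex}\in\mathcal{P}_{m^{\ast}}^0$, by hypothesis.

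Next, since $m\ge m^{\ast}$ we have $\mathcal{P}_{m^{\ast}}^0\subseteq\mathcal{P}_m^0$, so $p_{m^{\ast}}^{ex}$ is admissible for the GMRES minimization associated with $Ax=b$ at step $m$. Hence, using $b=b^{ex}+e$, the triangle inequality, and submultiplicativity of the spectral norm,
\[
\|r_m\|\le\|p_{m^{\ast}}^{ex}(A)b\|\le\|p_{m^{\ast}}^{ex}(A)b^{ex}\|+\|p_{m^{\ast}}^{ex}(A)e\|\le\|r_{m^{\ast}}^{ex}\|+\|p_{m^{\ast}}^{ex}(A)\|\,\|e\|.
\]
Finally I would invoke the assumption $\|p_m^{ex}(A)\|\le\eta^{\ast}$ (for $m\ge m^{\ast}$, so in particular for $m=m^{\ast}$) to bound the last term by $\eta^{\ast}\|e\|$, and then factor $\|e\|$ out of $\|r_{m^{\ast}}^{ex\,}\|+\eta^{\ast}\|e\|$ to obtain $\|r_m\|\le\bigl(\|r_{m^{\ast}}^{ex}\|/\|e\|+\eta^{\ast}\bigr)\|e\|=\eta\|e\|$, which is the claim.

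The argument presents no real obstacle; the only points that need attention are the degree bookkeeping — checking that $p_{m^{\ast}}^{ex}$ genuinely lies in $\mathcal{P}_m^0$, which is exactly why the hypothesis $m\ge m^{\ast}$ is required — and the remark that the GMRES optimality inequality $\|r_m\|\le\|q(A)b\|$ holds regardless of whether $Ax=b$ is consistent. Everything else reduces to the triangle inequality and the submultiplicativity of the Euclidean operator norm.
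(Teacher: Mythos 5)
Your proof is correct and follows essentially the same route as the paper: GMRES optimality at step $m$ with an exact-problem residual polynomial as candidate, then the triangle inequality and the operator-norm bound $\|p^{ex}(A)\|\le\eta^{\ast}$. The only (harmless) difference is that you insert $p_{m^{\ast}}^{ex}$ directly, using $\mathcal{P}_{m^{\ast}}^{0}\subseteq\mathcal{P}_{m}^{0}$, whereas the paper inserts $p_{m}^{ex}$ and then invokes the monotonicity $\|r_{m}^{ex}\|\le\|r_{m^{\ast}}^{ex}\|$ of the exact GMRES residuals; both arguments yield the same bound.
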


\begin{proof}
Since $b=b^{ex}+e$ and, thanks to the optimality property of the GMRES
residual,
\begin{equation*}
\left\Vert r_{m}\right\Vert =\min_{p_{m}(0)=1}\left\Vert
p_{m}(A)b\right\Vert \leq \left\Vert p_{m}^{ex}(A)b\right\Vert ,
\end{equation*}%
and hence%
\begin{equation*}
\left\Vert r_{m}\right\Vert\leq\left\Vert
p_m^{ex}(A)b^{ex}\right\Vert+\left\Vert p_m^{ex}(A)e\right\Vert \leq
\left\Vert r_{m}^{ex}\right\Vert +\eta ^{\ast }\Vert e\Vert .
\end{equation*}%
The result follows from $\left\Vert r_{m}^{ex}\right\Vert \leq \left\Vert
r_{m^{\ast }}^{ex}\right\Vert $, which holds for $m\geq m^{\ast}$.
\end{proof}

In the remaining part of this section, we try to give some additional
information about the value of the constant $\eta $ of Proposition \ref%
{prop1}. Let%
\begin{equation*}
\widetilde{V}_{m}=\left[ \frac{b}{\left\Vert b\right\Vert },\frac{Ab}{%
\left\Vert Ab\right\Vert },...,\frac{A^{m-1}b}{\left\Vert
A^{m-1}b\right\Vert }\right] ,\quad \widetilde{V}_{m}^{ex}=\left[ \frac{%
b^{ex}}{\left\Vert b^{ex}\right\Vert },\frac{Ab^{ex}}{\left\Vert
Ab^{ex}\right\Vert },...,\frac{A^{m-1}b^{ex}}{\left\Vert
A^{m-1}b^{ex}\right\Vert }\right] .
\end{equation*}%
With this notations we can write%
\begin{equation*}
\left\Vert r_{m}\right\Vert =\min_{s\in \mathbb{R}^{m+1},s_{1}=0}\left\Vert
b-\widetilde{V}_{m+1}s\right\Vert,
\end{equation*}
where $s_1$ is the first component of vector $s$.

\begin{proposition}
\label{prop2}For the GMRES residual we have%
\begin{equation*}
\left\Vert r_{m}\right\Vert \leq \eta (m)\Vert e\Vert ,
\end{equation*}%
where%
\begin{equation*}
\eta (m)=1+\frac{\left\Vert r_{m}^{ex}\right\Vert +\left\Vert \left(
\widetilde{V}_{m+1}-\widetilde{V}_{m+1}^{ex}\right) s^{ex}\right\Vert }{%
\left\Vert e\right\Vert },
\end{equation*}%
in which $s^{ex}$ ($s_{1}^{ex}=0$) is such that $\left\Vert
r_{m}^{ex}\right\Vert =\left\Vert b-\widetilde{V}_{m+1}^{ex}s^{ex}\right%
\Vert $.
\end{proposition}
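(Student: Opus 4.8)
The plan is to start from the variational characterization $\|r_m\| = \min_{s\in\mathbb{R}^{m+1},\,s_1=0}\|b - \widetilde{V}_{m+1}s\|$ and simply test it with the vector $s^{ex}$ that realizes the analogous minimum for the exact data, namely $\|r_m^{ex}\| = \|b^{ex} - \widetilde{V}_{m+1}^{ex}s^{ex}\|$. Since $s^{ex}$ is feasible for the perturbed minimization (its first component vanishes), we get $\|r_m\| \le \|b - \widetilde{V}_{m+1}s^{ex}\|$.

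Next I would insert $b = b^{ex} + e$ and add and subtract $\widetilde{V}_{m+1}^{ex}s^{ex}$ inside the norm, then apply the triangle inequality:
\begin{equation*}
\|r_m\| \le \|b^{ex} - \widetilde{V}_{m+1}^{ex}s^{ex}\| + \|(\widetilde{V}_{m+1} - \widetilde{V}_{m+1}^{ex})s^{ex}\| + \|e\|.
\end{equation*}
The first term is exactly $\|r_m^{ex}\|$ by the defining property of $s^{ex}$, and the third term is $\|e\|$. Factoring $\|e\|$ out of the whole right-hand side then gives precisely
\begin{equation*}
\|r_m\| \le \left(1 + \frac{\|r_m^{ex}\| + \|(\widetilde{V}_{m+1} - \widetilde{V}_{m+1}^{ex})s^{ex}\|}{\|e\|}\right)\|e\| = \eta(m)\|e\|,
\end{equation*}
which is the claim.

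There is really no serious obstacle here: the argument is a one-line application of optimality plus the triangle inequality, and the only thing to be careful about is the bookkeeping — making sure $s^{ex}$ is genuinely admissible in the perturbed problem (it is, since $s_1^{ex}=0$ is the only constraint) and that the three pieces are grouped correctly so that the $\|e\|$ factors out cleanly. The mild point worth a sentence of comment is \emph{why} this is a useful bound: unlike Proposition~\ref{prop1}, the constant $\eta(m)$ is completely explicit in terms of computable/interpretable quantities — the exact GMRES residual $\|r_m^{ex}\|$, which is small by Corollary~\ref{cor1} once the Krylov space is nearly $A$-invariant, and the discrepancy $\|(\widetilde{V}_{m+1} - \widetilde{V}_{m+1}^{ex})s^{ex}\|$ between the normalized (non-orthogonalized) Krylov bases built from $b$ and from $b^{ex}$, which quantifies how much the noise perturbs the Krylov directions. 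So after the two-step derivation I would add a remark noting that $\eta(m)$ is expected to be close to $1$ precisely in the regime where the theory of the preceding subsections applies, namely after a few iterations when both $\|r_m^{ex}\|$ has stagnated near $0$ and the Krylov subspace generated by $b$ is still close to the one generated by $b^{ex}$.
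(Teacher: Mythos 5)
Your proof is correct and is essentially identical to the paper's: both test the minimization defining $\left\Vert r_{m}\right\Vert$ with the feasible vector $s^{ex}$, insert $b=b^{ex}+e$, add and subtract $\widetilde{V}_{m+1}^{ex}s^{ex}$, and apply the triangle inequality. Your reading of the defining property of $s^{ex}$ as $\left\Vert r_{m}^{ex}\right\Vert =\left\Vert b^{ex}-\widetilde{V}_{m+1}^{ex}s^{ex}\right\Vert$ (with $b^{ex}$ rather than the $b$ appearing in the statement, which is evidently a typo) is the one the paper's own proof uses.
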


\begin{proof}
We have%
\begin{eqnarray*}
\left\Vert r_{m}\right\Vert &=&\min_{s\in \mathbb{R}^{m}}\left\Vert b-%
\widetilde{V}_{m+1}s\right\Vert \leq \left\Vert b-\widetilde{V}%
_{m+1}s^{ex}\right\Vert \\
&=&\left\Vert b^{ex}+e-\widetilde{V}_{m+1}s^{ex}+\widetilde{V}%
_{m+1}^{ex}s^{ex}-\widetilde{V}_{m+1}^{ex}s^{ex}\right\Vert \\
&\leq &\left\Vert r_{m}^{ex}\right\Vert +\left\Vert e\right\Vert +\left\Vert
\left( \widetilde{V}_{m+1}-\widetilde{V}_{m+1}^{ex}\right) s^{ex}\right\Vert
.
\end{eqnarray*}
\end{proof}

The fast decay of the singular values of $A$ ensures that, for $k\geq 1$
(note that $s_{1}^{ex}=0$)
\begin{equation}
\frac{1}{\left\Vert e\right\Vert }\left\Vert \frac{A^{k}b}{\left\Vert
A^{k}b\right\Vert }-\frac{A^{k}b^{ex}}{\left\Vert A^{k}b^{ex}\right\Vert }%
\right\Vert \ll 1,  \label{rrr}
\end{equation}%
so that, whenever $\left\Vert r_{m}^{ex}\right\Vert \approx 0$, we have $%
\eta (m)\approx 1$. Condition (\ref{rrr}) is also at the basis of the
so-called range-restricted approach for Krylov type methods (see \cite{LR}).
We also remark that the relation (\ref{rrr}) can be interpreted as the
discrete analogous of the Riemann-Lebesgue Lemma (see e.g. \cite[p.6]{PCH}),
whenever we assume that the noise $e$ does not involve low frequencies. We
give some examples of this behavior in Figure \ref{f03}.

\begin{figure}[tbp]
\centering
\includegraphics[width=0.50\textwidth]{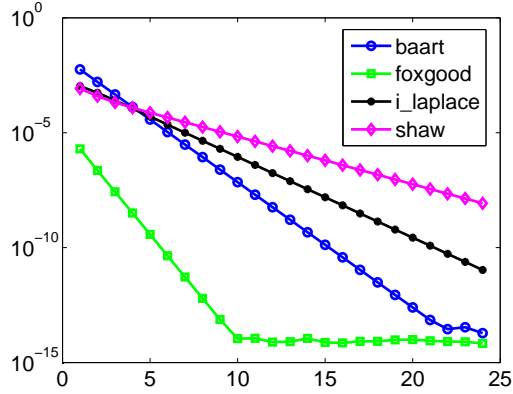}
\caption{Decay of the quantities $\frac{1}{\Vert e\Vert }\left\Vert \frac{%
A^{k}b}{\left\Vert A^{k}b\right\Vert }-\frac{A^{k}b^{ex}}{\left\Vert
A^{k}b^{ex}\right\Vert }\right\Vert $ versus the value of $k\geq 1$. The right-hand side is
affected by 1\% Gaussian noise.}
\label{f03}
\end{figure}

Finally, in Figure \ref{f04} we prove experimentally our main assumption,
that is, $\left\Vert r_{m}\right\Vert \approx \left\Vert e\right\Vert $ for $%
m$ sufficiently large, which justifies the use of formula (\ref{NEWupdate}).

\begin{figure}[]
\centering
\begin{tabular}{cc}
\includegraphics[width=0.40\textwidth]{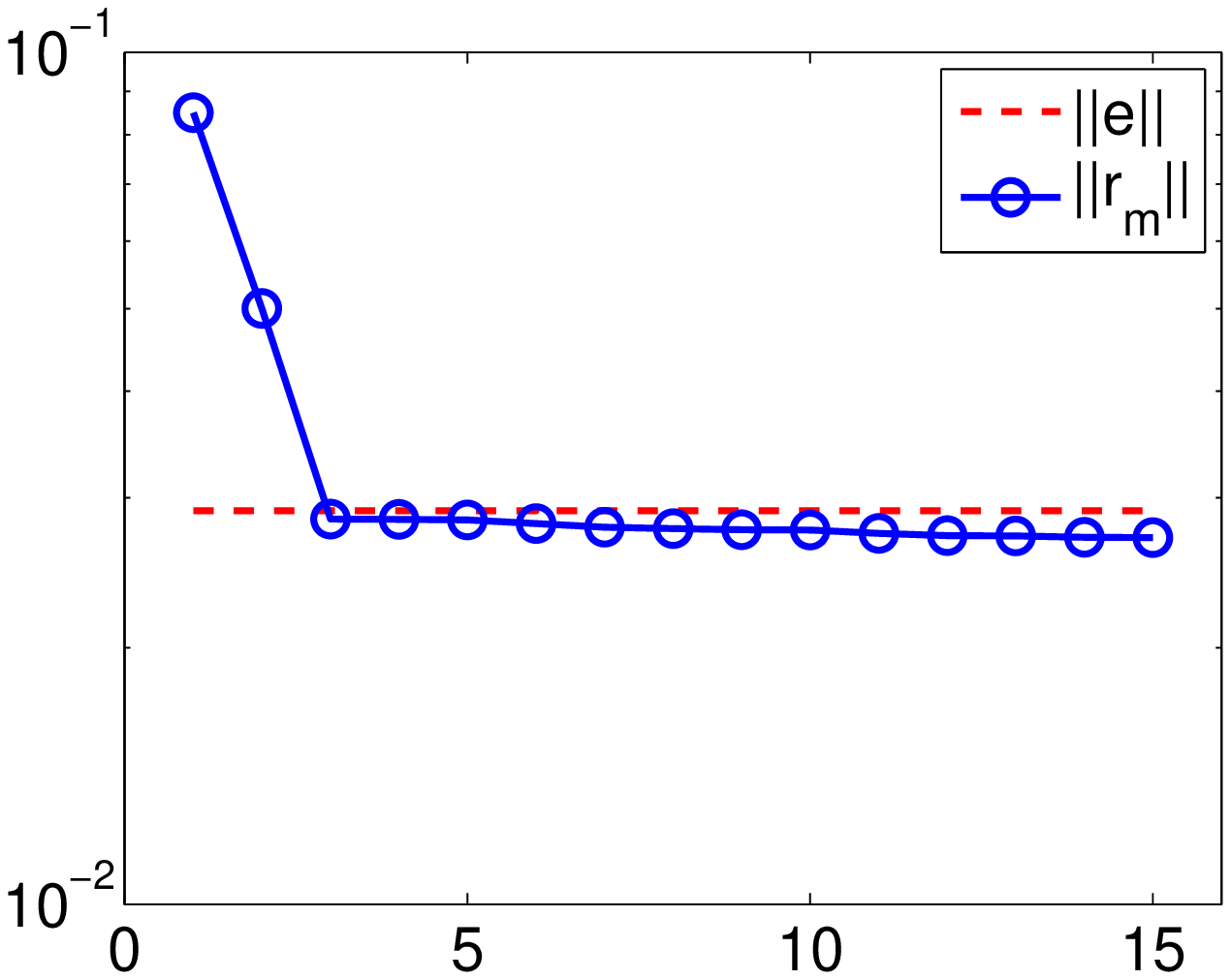} & %
\includegraphics[width=0.40\textwidth]{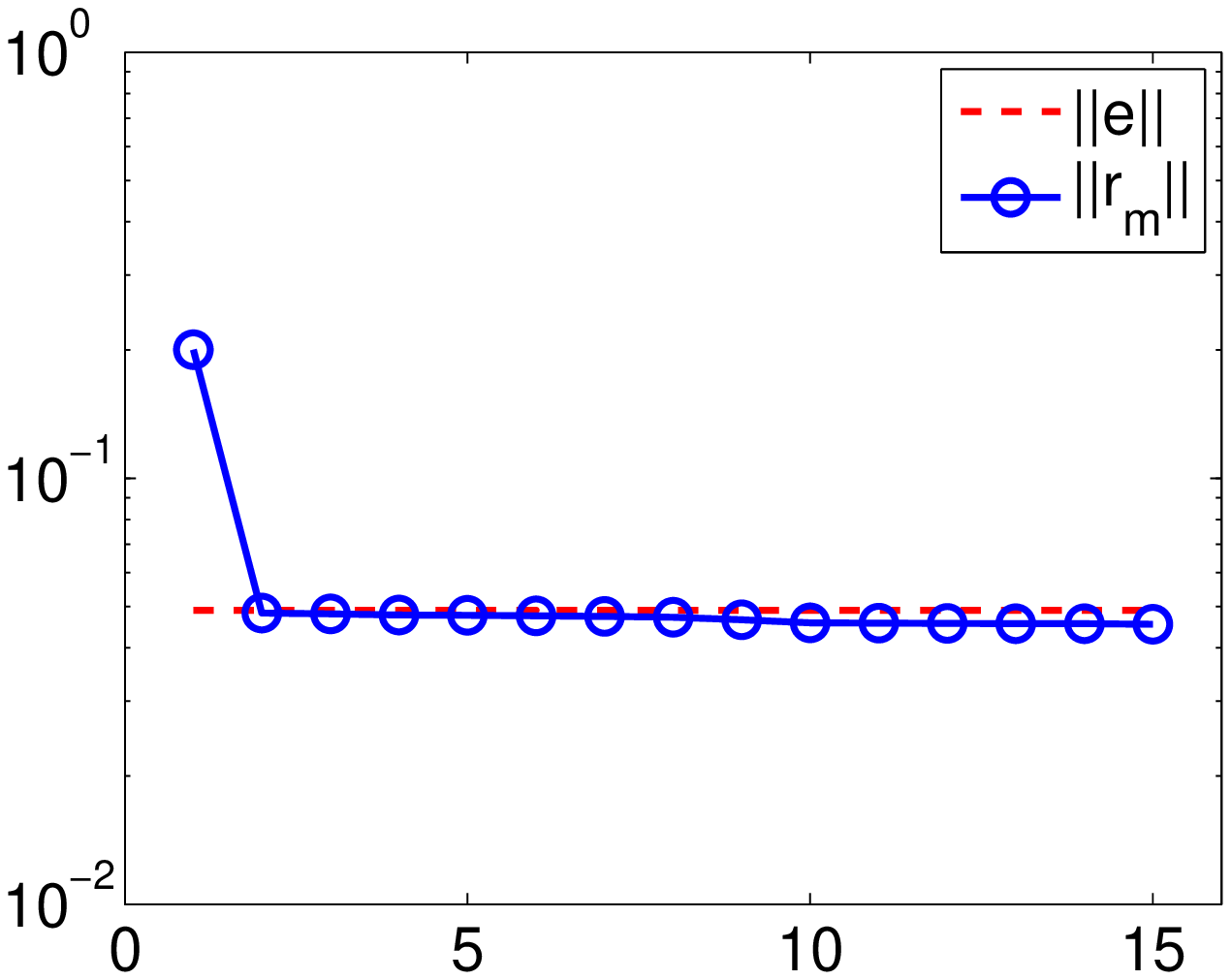} \\
\includegraphics[width=0.40\textwidth]{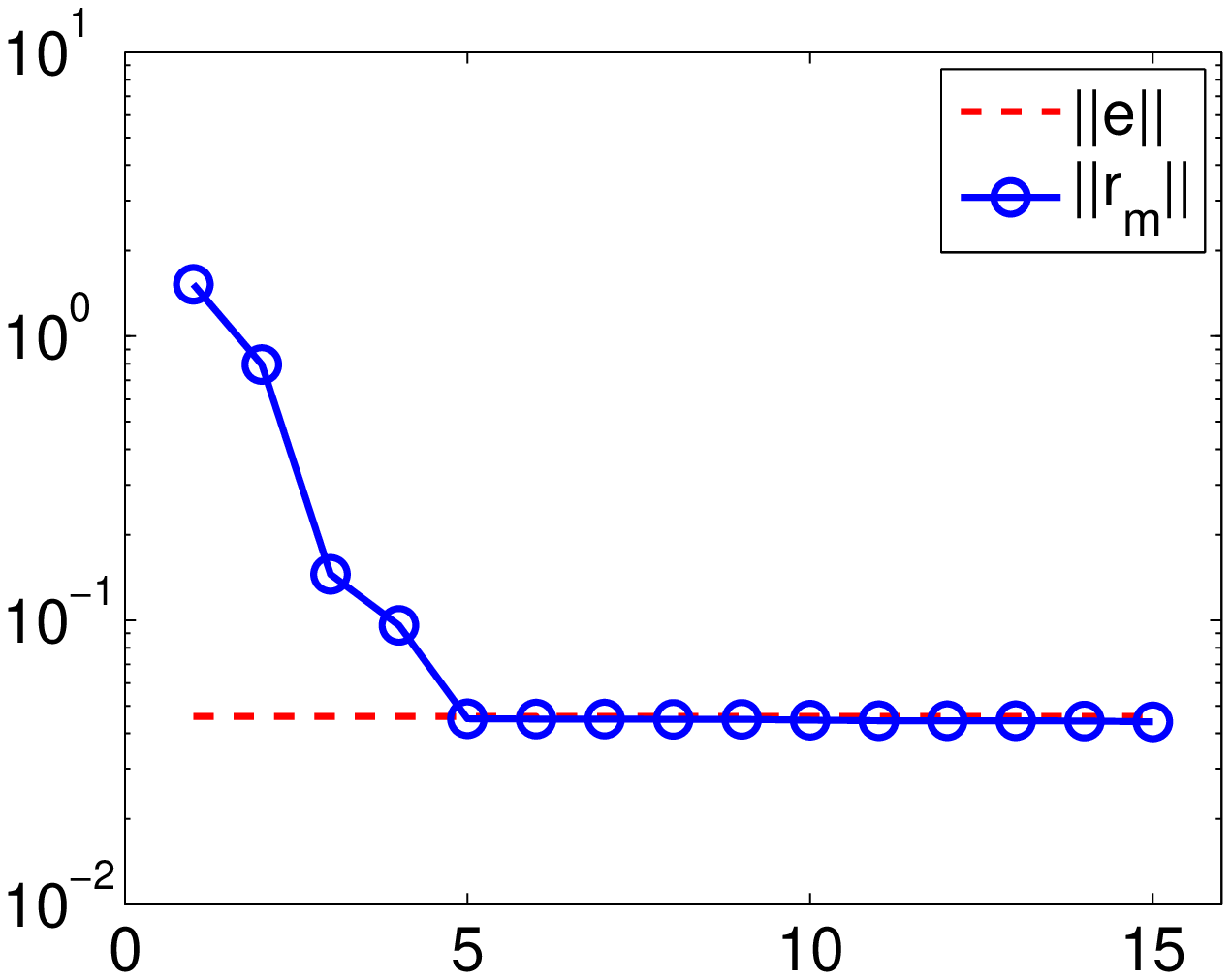} & %
\includegraphics[width=0.40\textwidth]{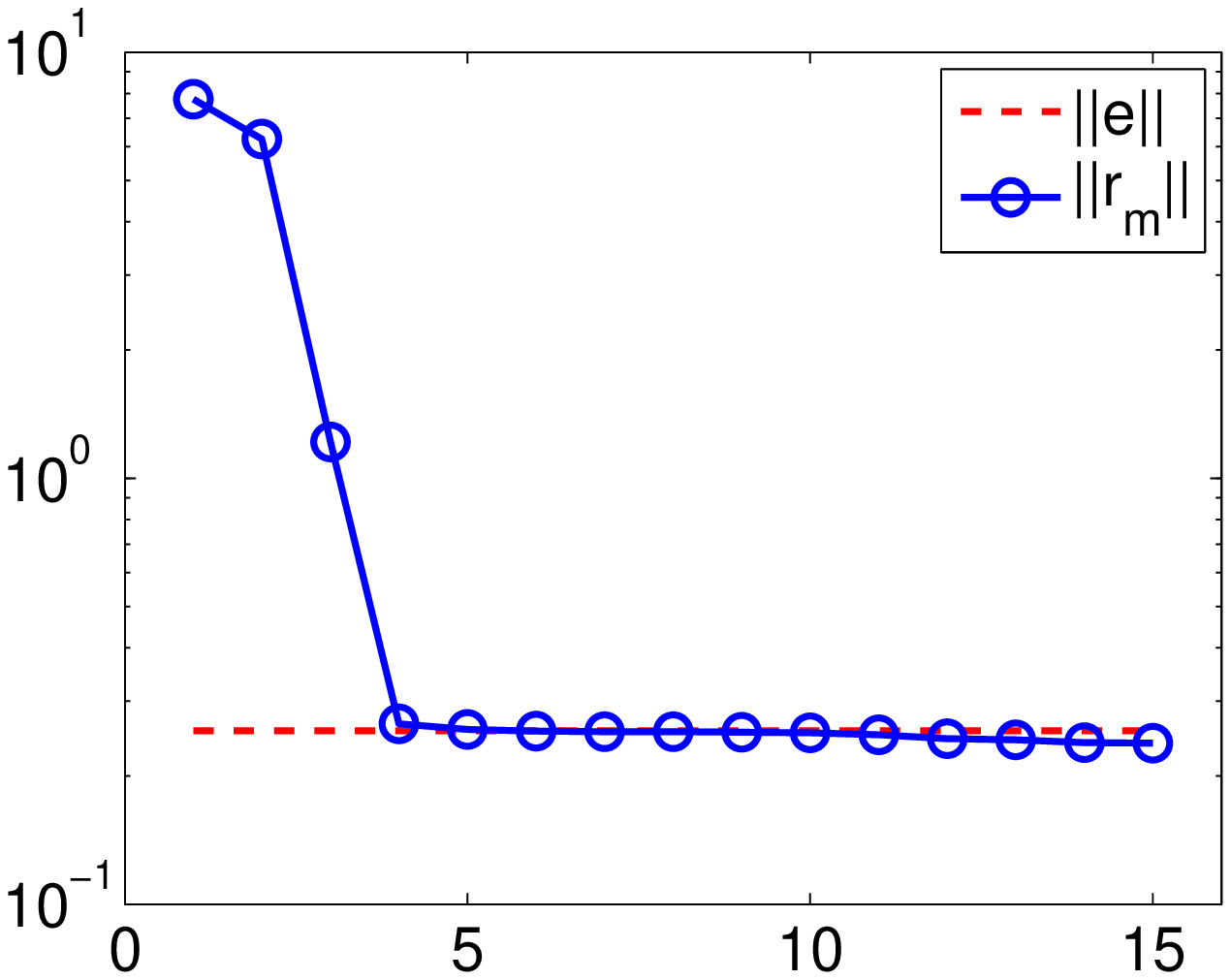}%
\end{tabular}%
.
\caption{GMRES residual history when the right-hand side is affected by 1\%
noise. In clockwise order the problem considered are \texttt{%
baart}, \texttt{foxgood}, \texttt{shaw} and \texttt{i\_laplace}}
\label{f04}
\end{figure}

\section{Algorithm and Numerical Experiments}

\label{sect:NumExp}

Comparing the parameter selection strategies (\ref{lambdaABS}) and (\ref%
{NEWupdate}), we can state that (\ref{NEWupdate}) generalizes the approach
described in Section 3, since no knowledge of $\Vert e\Vert $ is assumed.
However, on the downside, scheme (\ref{lambdaABS}) can simultaneously
determine the value of the regularization parameter at each iteration and
the number of iterations to be performed, while this is no more possible
considering the rule (\ref{NEWupdate}). In order to determine when to stop
the iterations of the Arnoldi algorithm, we have to consider a separate
stopping criterion. Since both $\phi _{m}(\lambda _{m-1})$ and $\Vert
r_m\Vert $ exhibit a stable behavior going on with the iterations, a way to
set $m$ is to monitor when such stability occurs, i.e., to evaluate the
relative difference between the norm of the residuals and the relative
difference between the discrepancy functions. Therefore, once two thresholds
$\tau_{\mathrm{res}}$ and $\tau_{\mathrm{discr}}$ have been set, we decide
to stop the iterations as soon as%
\begin{equation}
\frac{\Vert r_m\Vert -\Vert r_{m-1}\Vert }{\Vert r_{m-1}\Vert}<\tau_{\mathrm{%
res}} ,  \label{stopRES}
\end{equation}%
and
\begin{equation}
\frac{\phi _{m}(\lambda _{m-1})-\phi _{m-1}(\lambda _{m-2})}{\phi
_{m-1}(\lambda _{m-2})}<\tau_{\mathrm{discr}}.  \label{stopDISCR}
\end{equation}%
This approach is very similar to the one adopted in \cite{wGCV} for the GCV
method in a hybrid setting. Also in \cite{ATfirst} the authors decide to
terminate the Arnoldi process when the corners of two consecutive projected
L-curves are pretty close. We can also expect the value of $\lambda _{m}$
obtained at the end of the iterations to be suitable for the original
problem (\ref{GenTikh}).

The method so far described, can be summarized in the following

\begin{algorithm}
\label{Alg1}
\caption{AT method equipped with the parameter choice rule
(\ref{NEWupdate})}
\begin{algorithmic}
\State \textbf{Inputs}: $A$, $b$, $L$, $x_0$, $\lambda_0$, $\eta$, $\tres$, $\tdiscr$
\State \textbf{For} $m=1,2,\dots,$ until (\ref{stopRES}) and (\ref{stopDISCR}) are both fulfilled
\begin{enumerate}
\item
Update $W_{m}$ and $\bar{H}_m$ by the Arnoldi algorithm (\ref{Arnoldi}).
\item
Compute the reduced-dimension GMRES solution $y_{m,0}$ 
(cf. (\ref{TikhPr}) and the corresponding residual $r_m$.
\item
Compute the solution $y_{m,\lambda}$ of (\ref{GAT-LS-red}), taking
\[
\begin{cases}
\lambda=\lambda_{0} & \mathrm{if}\: m=1,2,\\
\lambda=\lambda_{m-1} & \mathrm{otherwise.}
\end{cases}.
\]
\item Compute 
the 
discrepancy $\phi_m(\lambda_{m-1})=\|\bar{H}_my_{m,\lambda_{m-1}}-c\|$.
\item if $m\geq 2$ update $\lambda_m$ by formula (\ref{NEWupdate}).
\end{enumerate}
\State \textbf{end}
\State Compute $x_{m,\lambda_{m-1}}=W_my_{m,\lambda_{m-1}}$.
\end{algorithmic}
\end{algorithm}

To illustrate the behavior of this algorithm, we treat three different kinds
of test problems. All the experiments have been carried out using Matlab
7.10 with $16$ significant digits on a single processor computer (Intel Core
i7). The algorithm is implemented with $\lambda _{0}=1$, $\eta
=1.02$, and $\tau _{\mathrm{res}}=\tau _{\mathrm{discr}}=5\cdot 10^{-2}$.

\subsection{Test problems from \texttt{Regularization Tools}}

We consider again some classical test problems taken from Hansen's \texttt{Regularization Tools}
\cite{H1}. In particular in Figure \ref{F1}, we report the results for
the problems \texttt{baart}, \texttt{shaw}, \texttt{foxgood}, \texttt{%
i\_laplace}; the right-hand side $b$ is affected by additive 0.1\% Gaussian
noise $e$, such that the noise level $\varepsilon =\Vert e\Vert /\Vert
b^{ex}\Vert$ is equal to $10^{-3}$. The dimension of each problem is $N=120$%
. The regularization operator used is the discrete first derivative $L_1$
for \texttt{shaw} and \texttt{i\_laplace}, and the discrete second
derivative $L_2$ for \texttt{baart} and \texttt{foxgood}, augmented with one
or two zero rows respectively, in order to make it square, that is,

\begin{equation}  \label{RegMatr}
L_{1}:=\left(
\begin{array}{cccc}
1 & -1 &  &  \\
& \ddots & \ddots &  \\
&  & 1 & -1 \\
0 & ... & ... & 0%
\end{array}%
\right) ,\quad L_{2}:=\left(
\begin{array}{ccccc}
1 & -2 & 1 &  &  \\
& \ddots & \ddots & \ddots &  \\
&  & 1 & -2 & 1 \\
0 & ... & ... & ... & 0 \\
0 & ... & ... & ... & 0%
\end{array}%
\right) .
\end{equation}

\begin{figure}[tbp]
\centering
\includegraphics[width=0.32\textwidth]{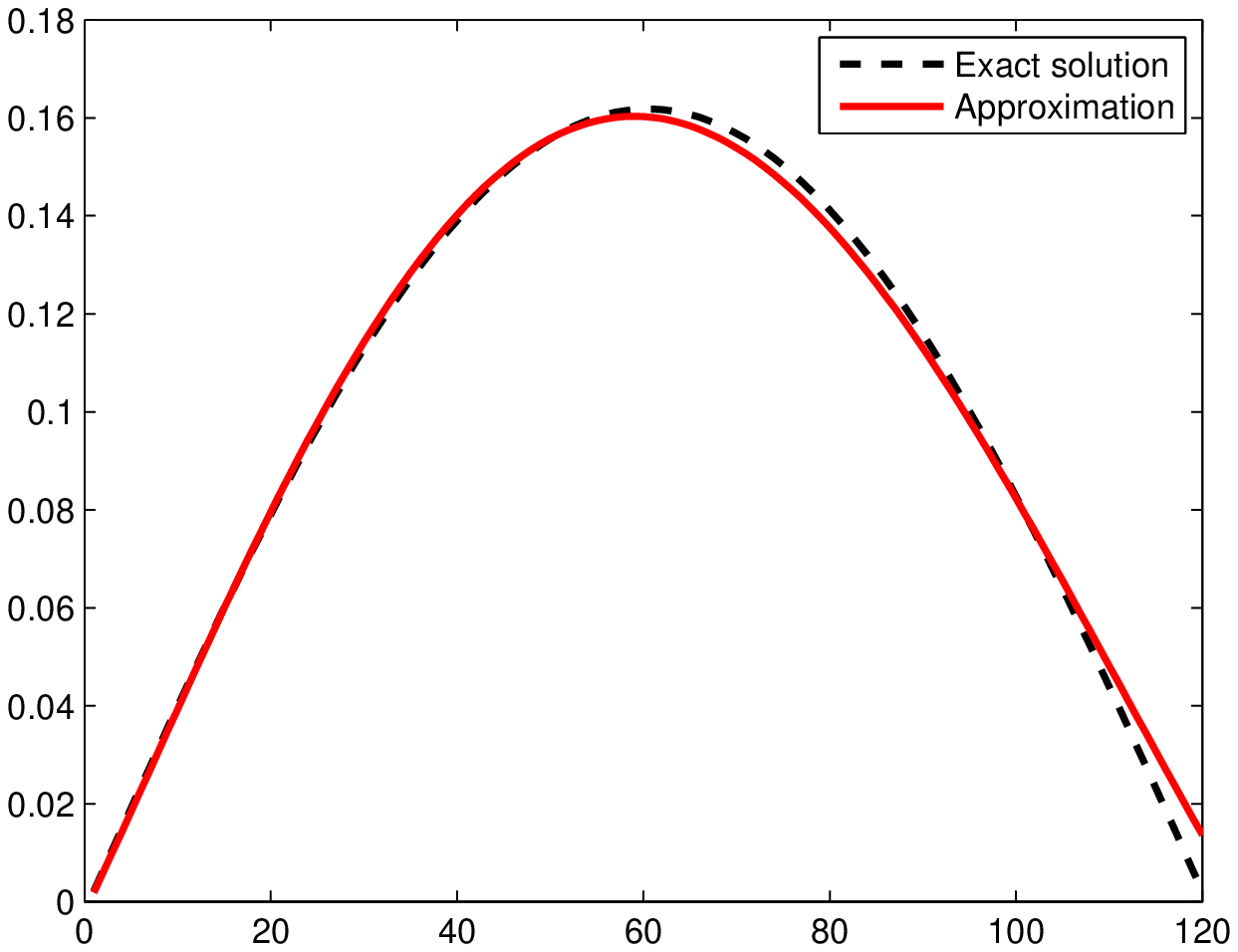} %
\includegraphics[width=0.32\textwidth]{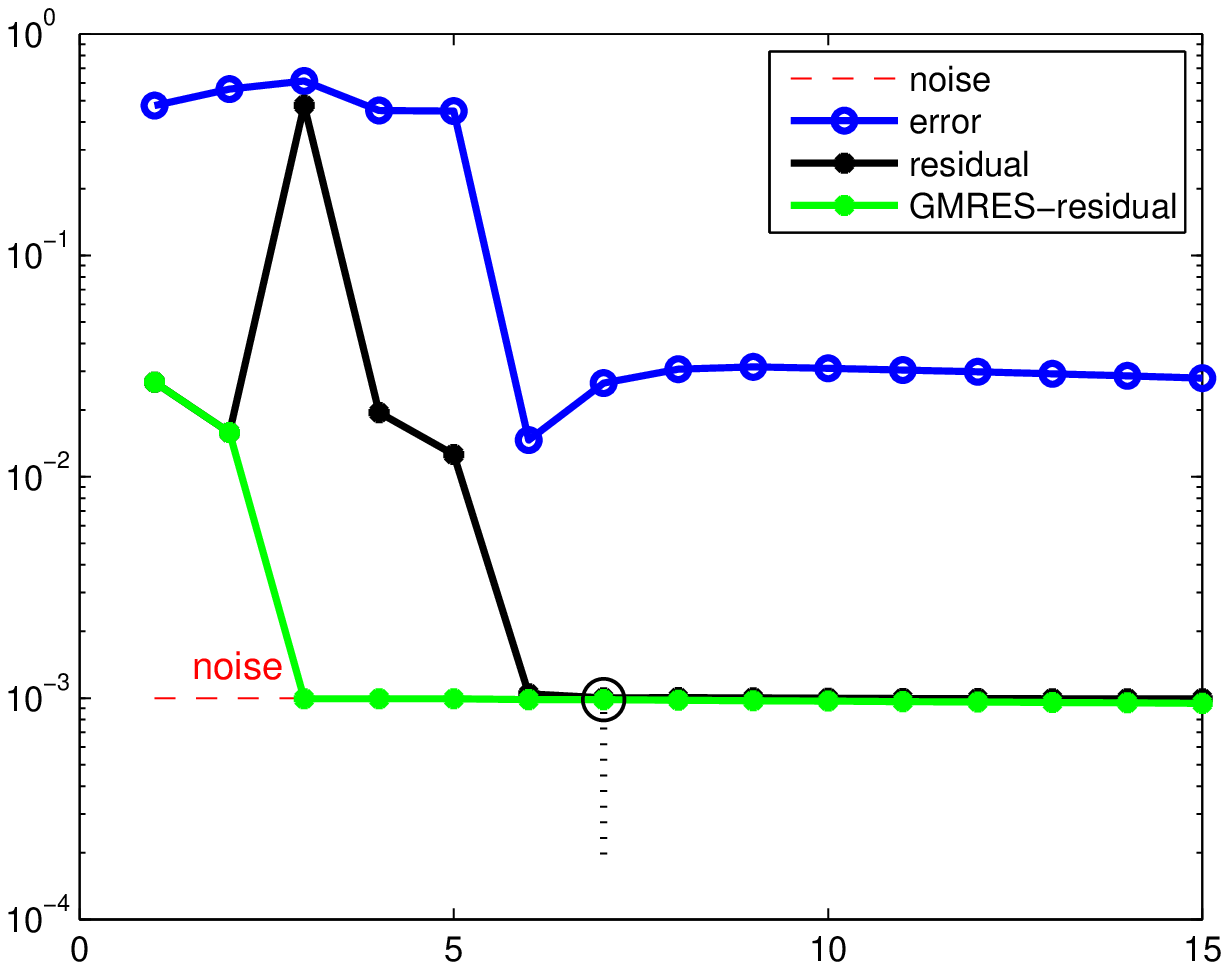} %
\includegraphics[width=0.32\textwidth]{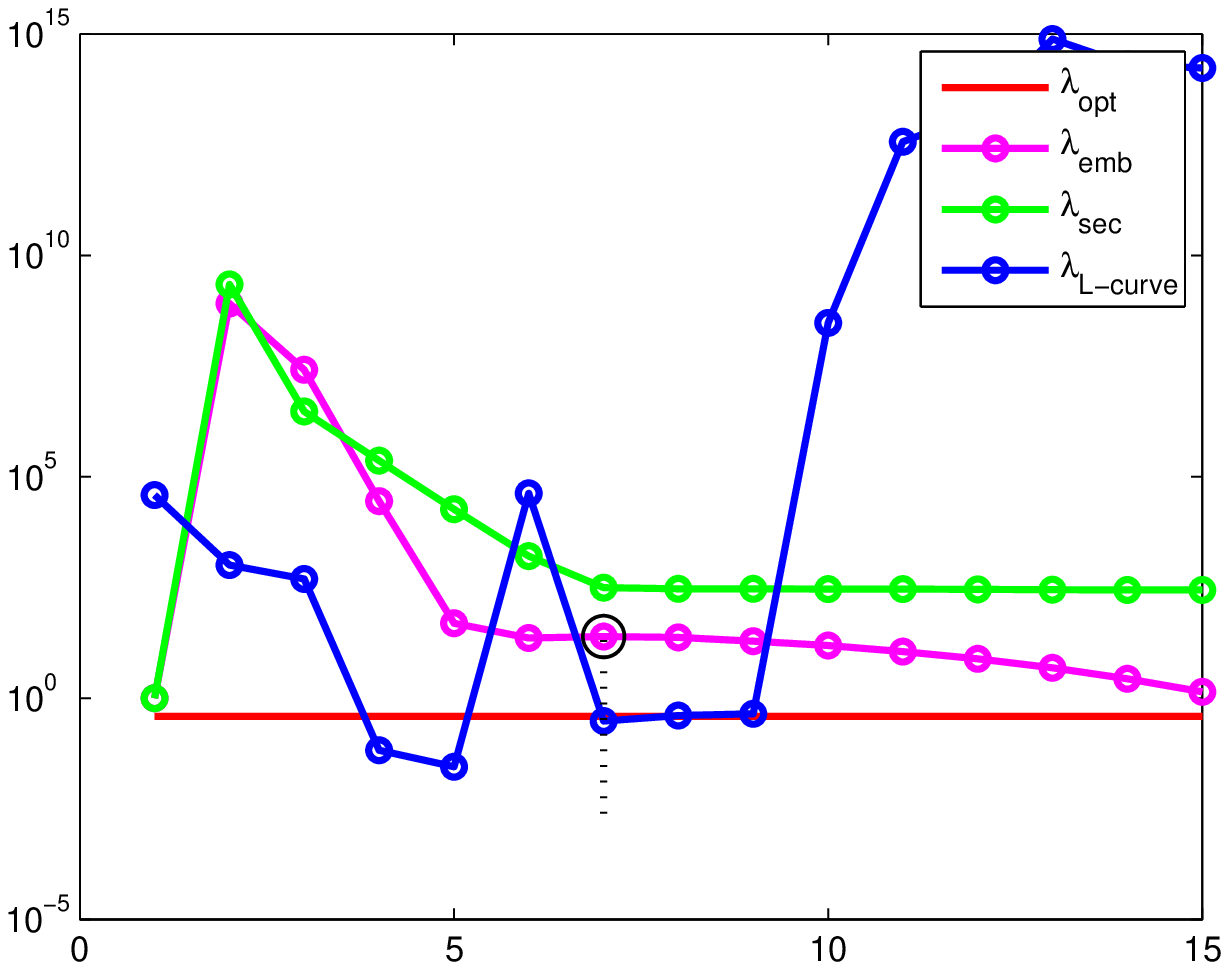} \newline
\par
\includegraphics[width=0.32\textwidth]{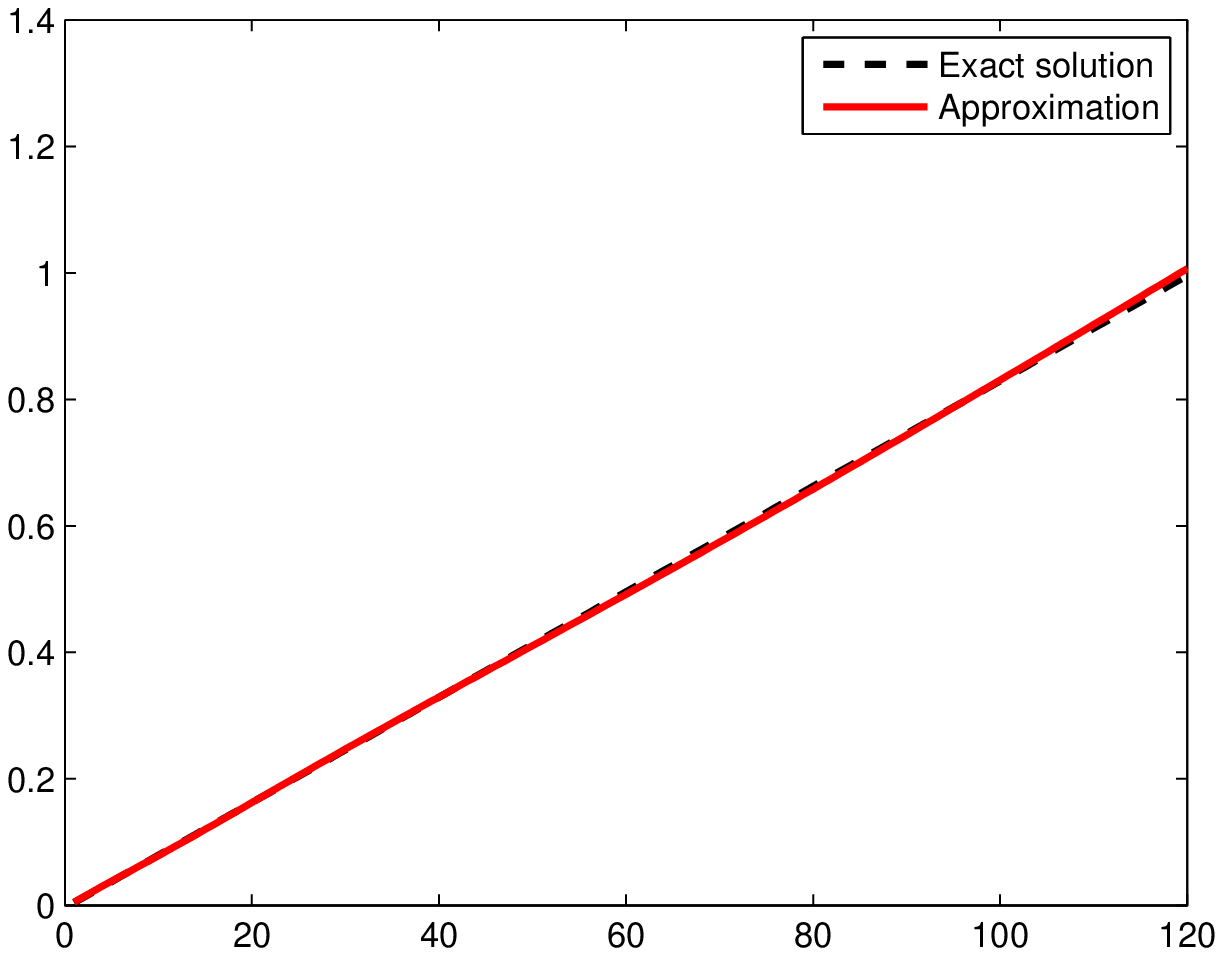} %
\includegraphics[width=0.32\textwidth]{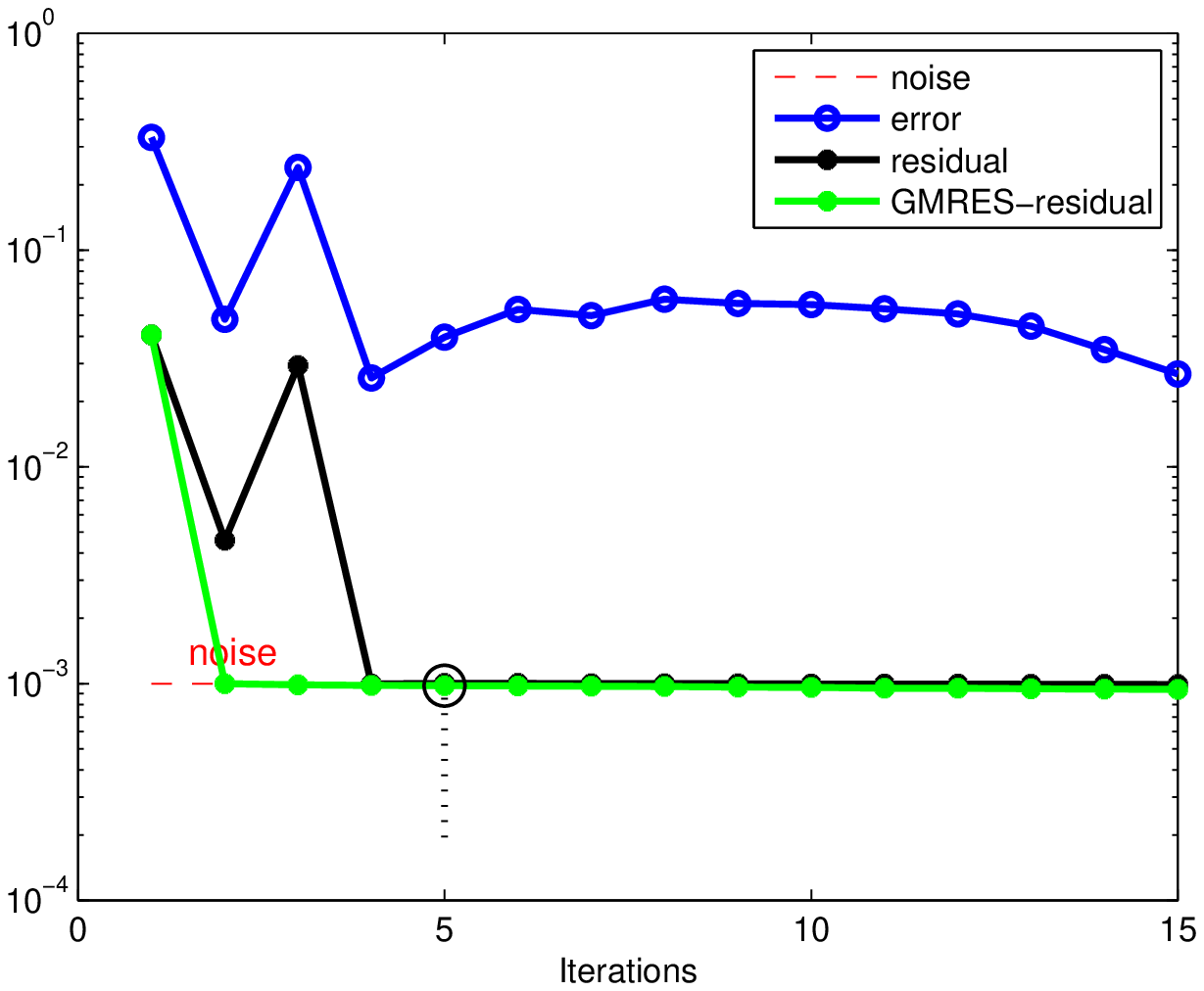} %
\includegraphics[width=0.32\textwidth]{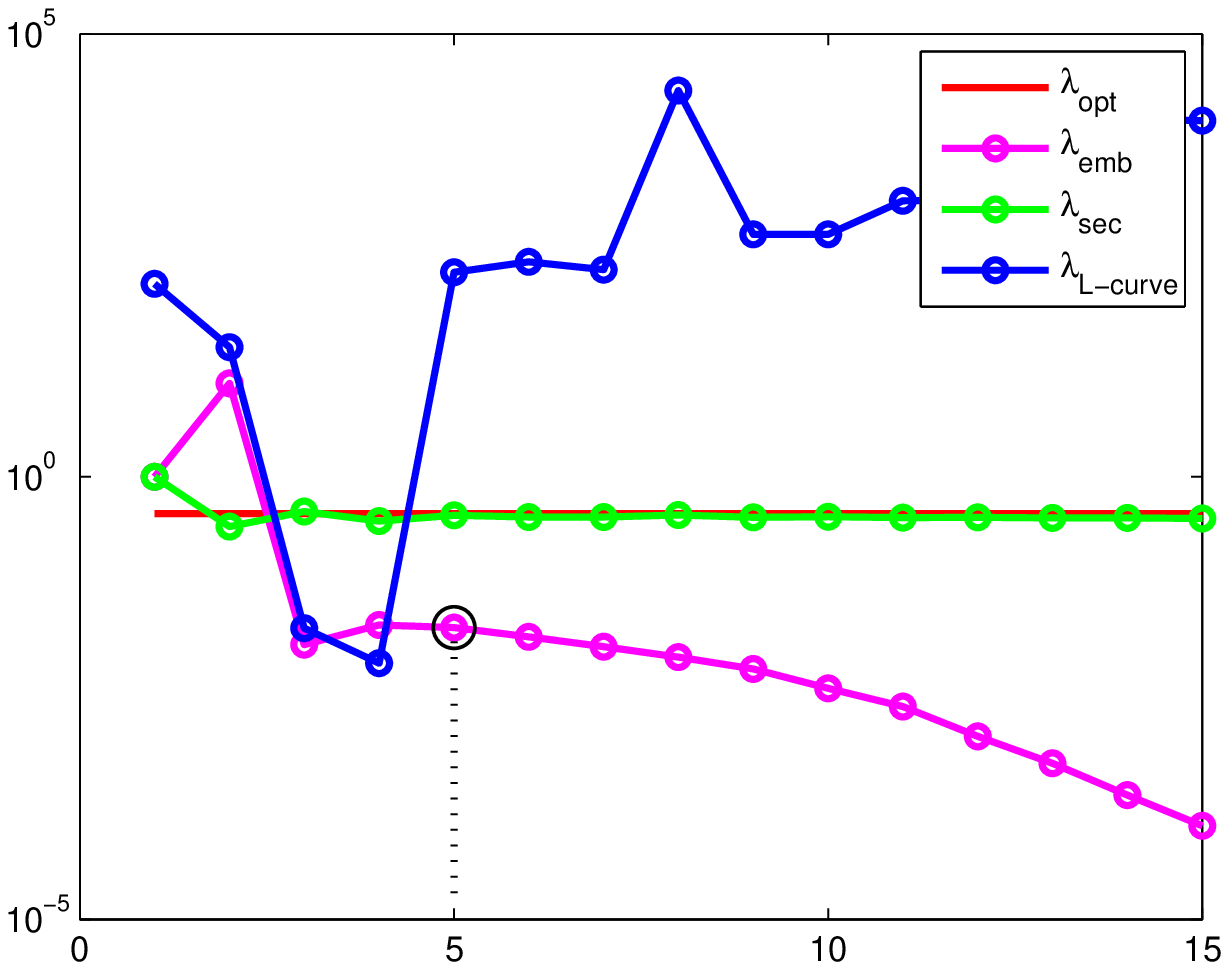} \newline
\par
\includegraphics[width=0.32\textwidth]{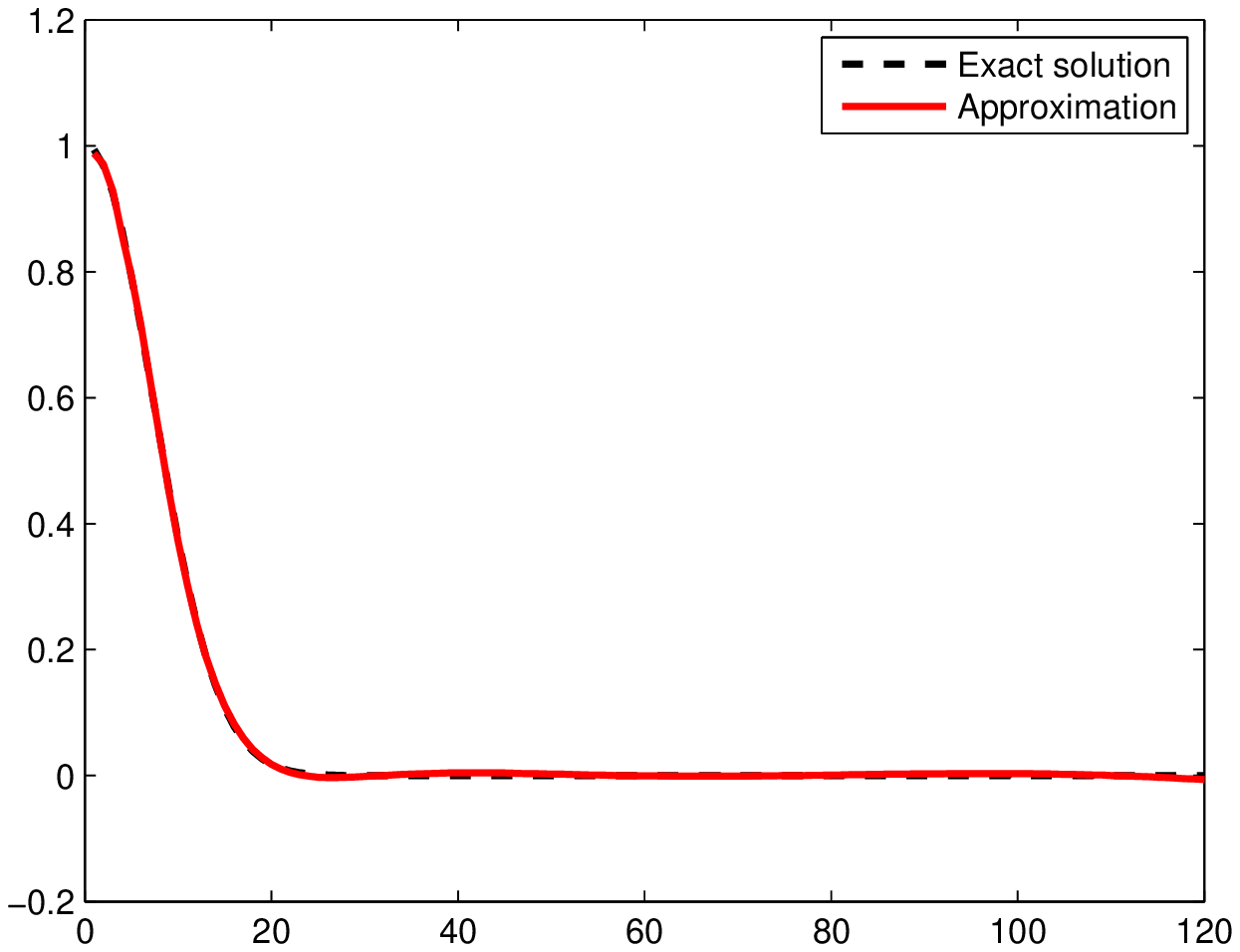} %
\includegraphics[width=0.32\textwidth]{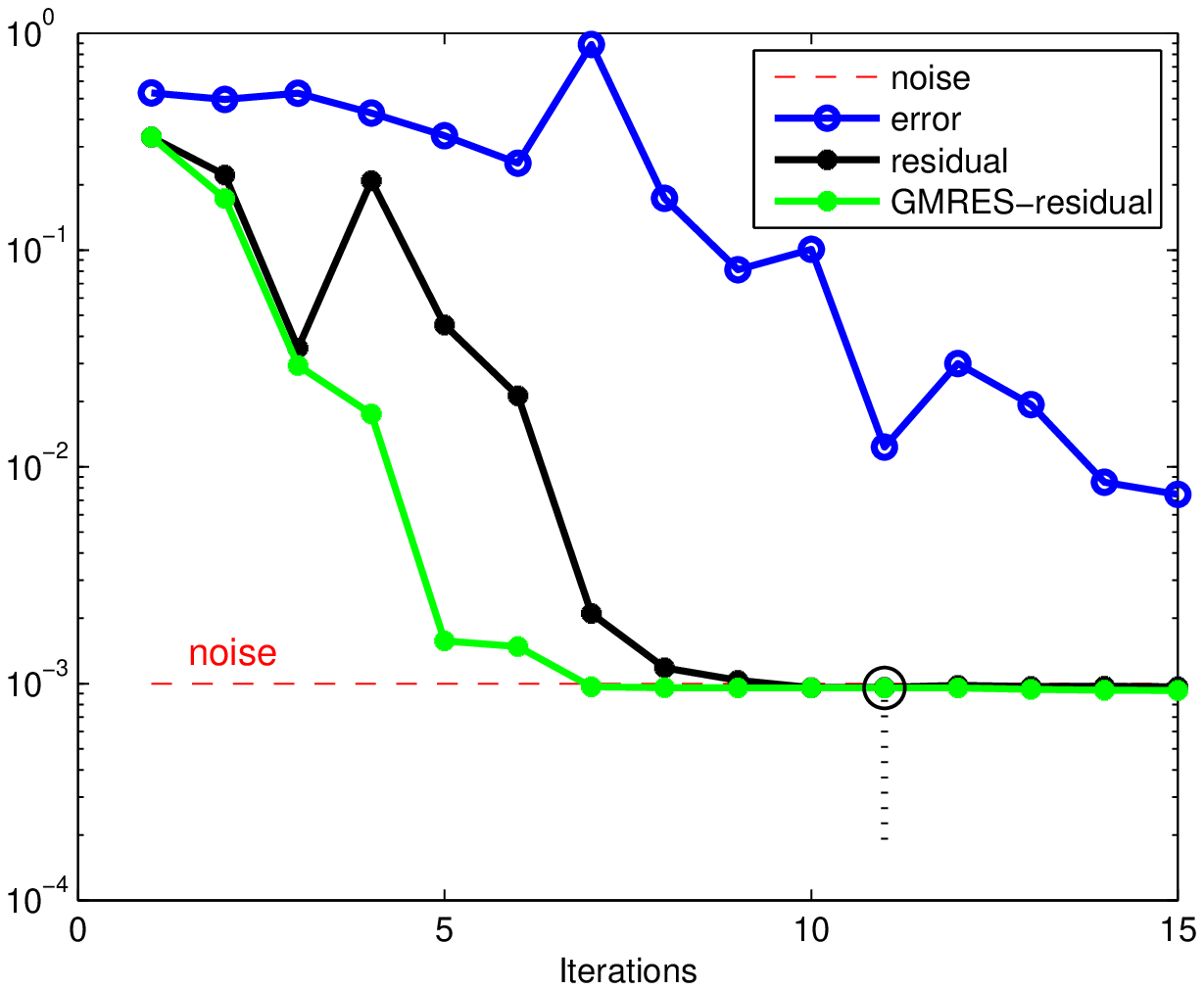} %
\includegraphics[width=0.32\textwidth]{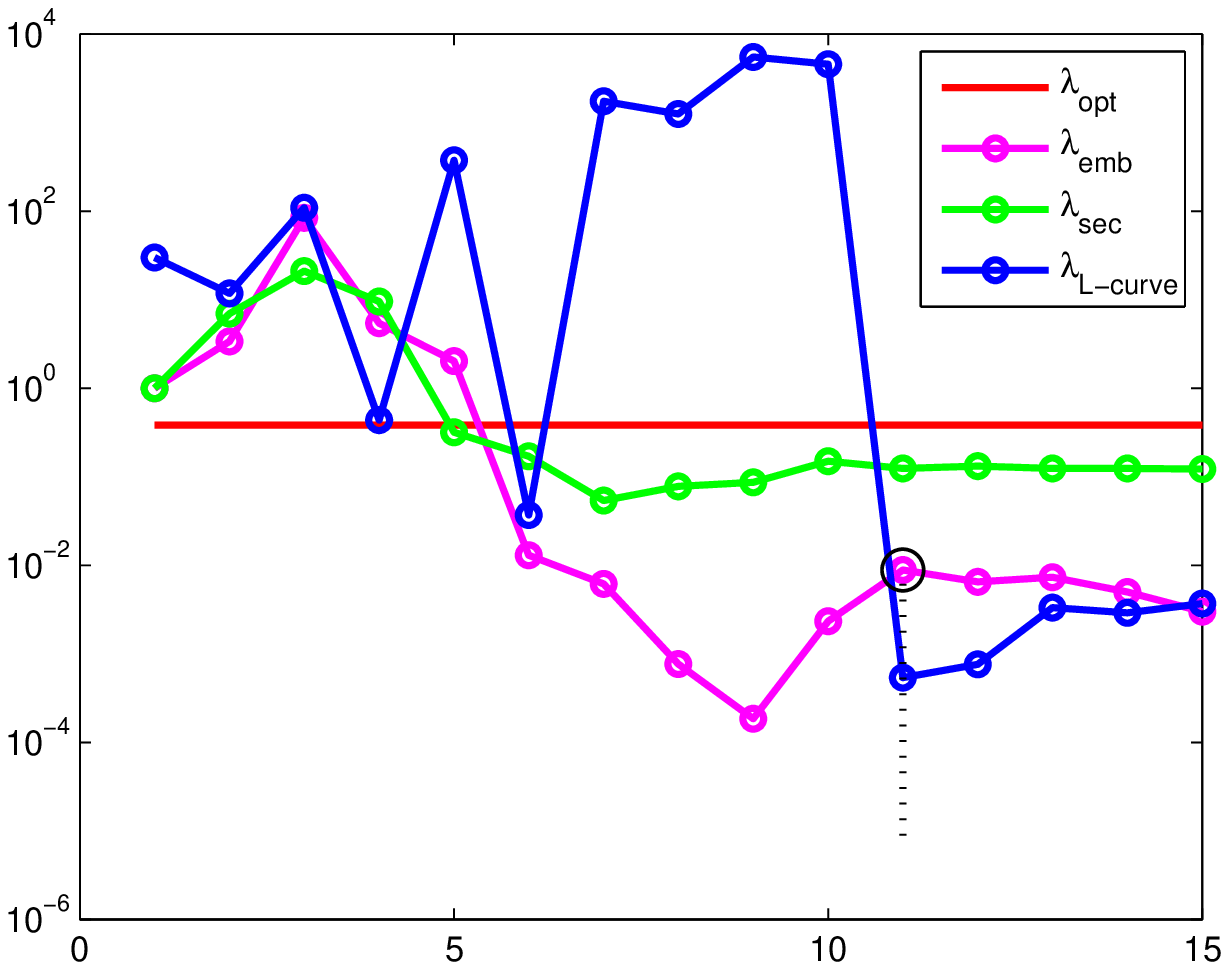} \newline
\par
\includegraphics[width=0.32\textwidth]{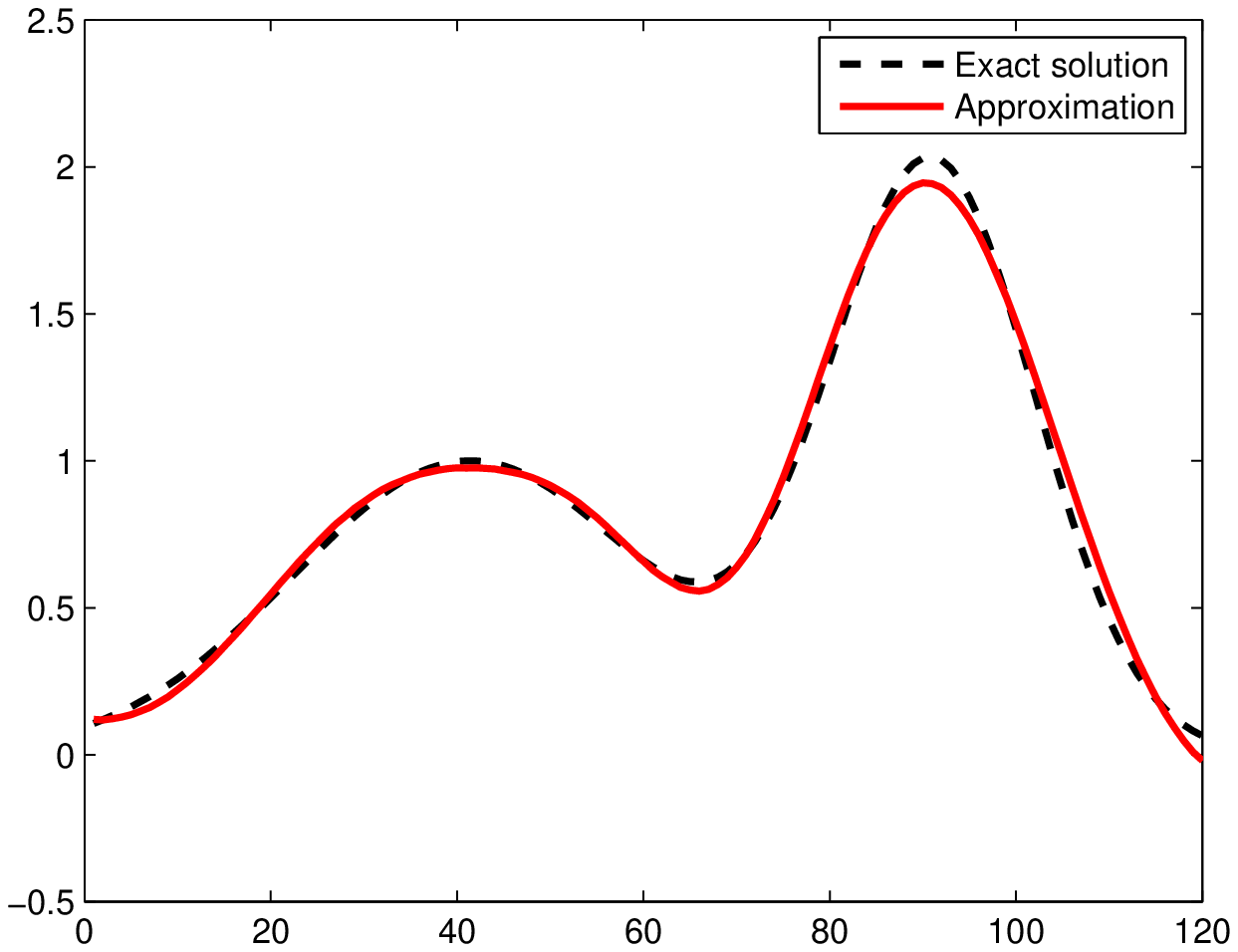} %
\includegraphics[width=0.32\textwidth]{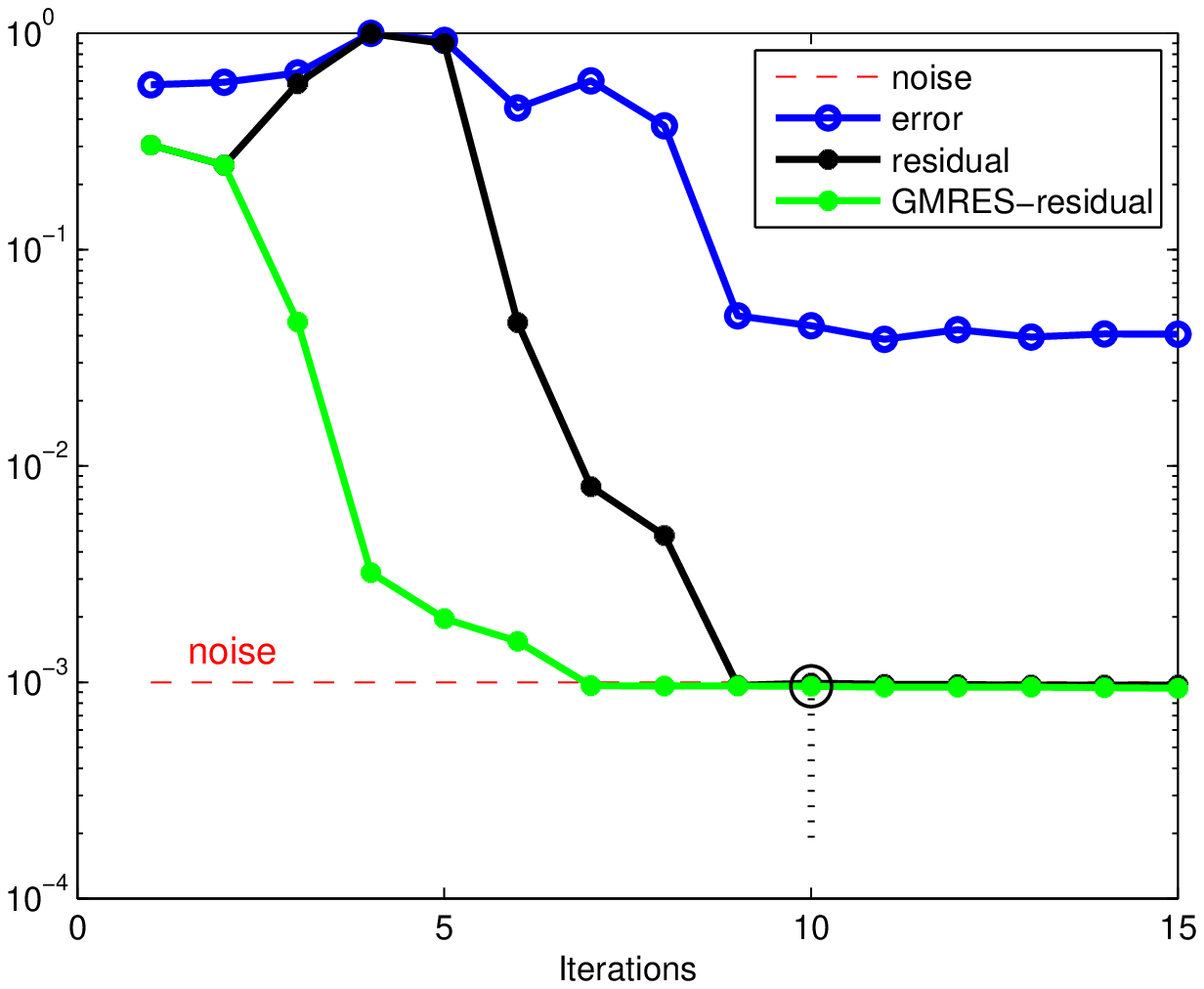} %
\includegraphics[width=0.32\textwidth]{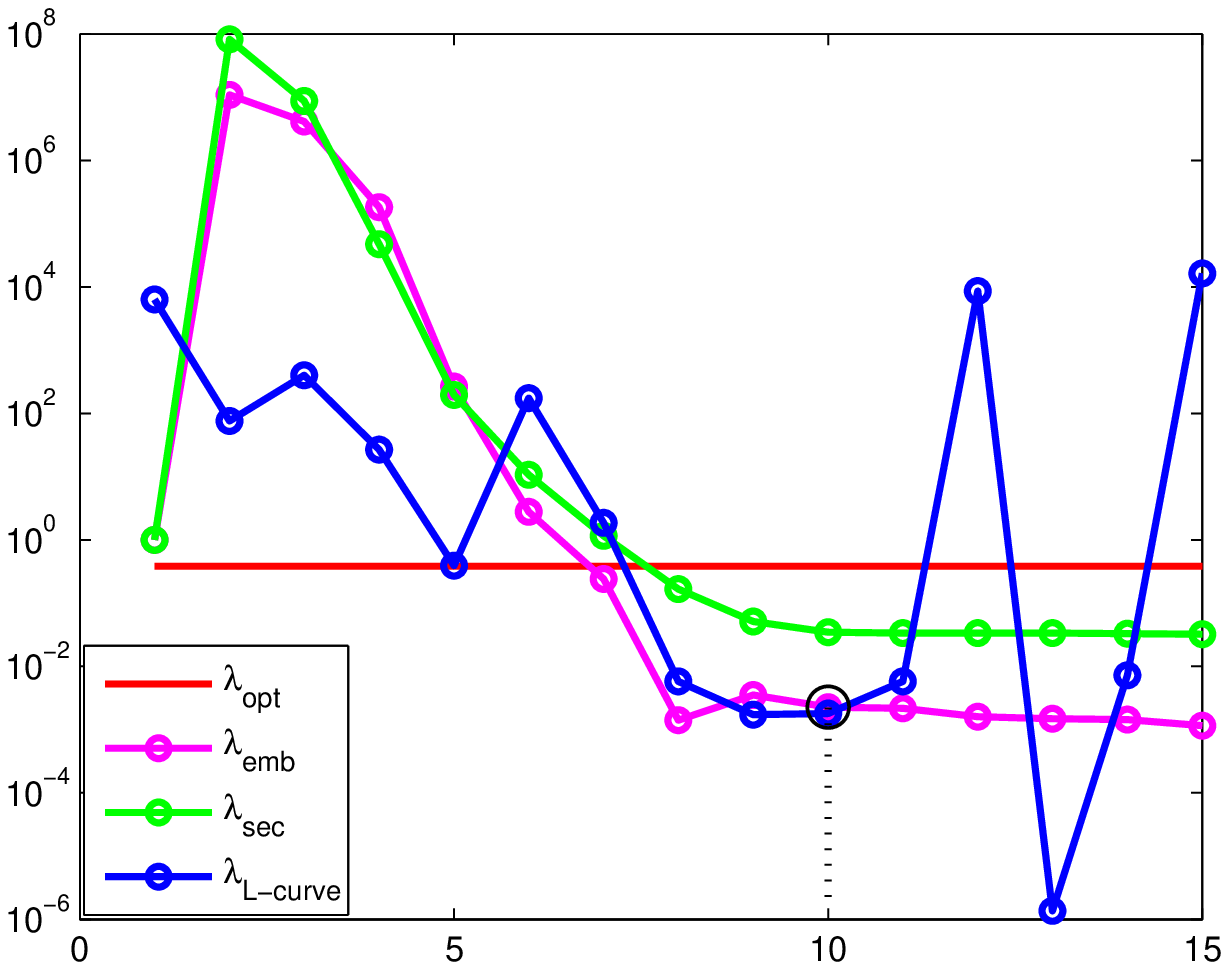}
\caption{From top to bottom: results for \texttt{baart}, \texttt{foxgood},
\texttt{i\_laplace}, \texttt{shaw}. On the left column we display the
computed approximate solution. In the middle column we show the convergence
behavior of the new method (error, discrepancy and GMRES residual) with the
noise level highlighted by a dashed lines. On the right we compare different
parameter choice strategies. The tick circle displayed in all the
frame of the middle and the rightmost columns marks the iteration at which we
would stop, according to the rule (\protect\ref{stopRES}), (\protect\ref%
{stopDISCR}). The approximate solutions refer to this iteration.}
\label{F1}
\end{figure}

For each experiment we show: a) the approximate solution; b) the relative
residual and error history; c) the value of the regularization parameter
computed at each iteration by the secant update method ($\lambda _{\mathrm{%
sec}}$) given by formula (\ref{lambdaABS}), the embedded method ($\lambda _{%
\mathrm{emb}}$) computed by (\ref{NEWupdate}), the ones arising from the
L-curve criterion ($\lambda _{\mathrm{L-curve}}$) see \cite{ATfirst}, and
the optimal one ($\lambda _{\mathrm{opt}}$) for the original,
full-dimensional regularized problem (\ref{GenTikh}) obtained by the
minimization of the distance between the regularized and the exact solution
\cite{OL1}
\begin{equation*}
\min_{\lambda }\left\Vert x_{\lambda}-x^{ex}\right\Vert ^{2}= \min_{\lambda
}\left\Vert \sum_{i=1}^{P} \frac{\lambda ^{2}}{(\gamma _{i}^{2}+\lambda ^{2})%
}\frac{\bar{u}_{i}^{T}b}{\sigma _{i}}x_{i}+\sum_{i=P+1}^{N}(u_{i}^{T}b)x_{i}
-\sum_{i=1}^{N}\frac{u_{i}^{T}b^{ex}}{\sigma _{i}}v_{i} \right\Vert,
\end{equation*}%
where $\gamma_i$, $\bar{u}_i$, $i=1,\dots,P$ are respectively the
generalized singular values and left generalized singular vectors of $(A,L)$%
, and $x_i$, $i=1,\dots,N$ are the right generalized singular vectors of $%
(A,L)$.

\subsection{Results for Image Restoration}

To test the performance of our algorithm in the image restoration contest, a
number of experiments were carried out, some of which are presented here.

Let $X$ be a $n\times n$ two dimensional image. The vector $x^{ex}$ of
dimension $N=n^{2}$ obtained by stacking the columns of the image $X$ and
the associated blurred and noise-free image $b^{ex}$ is generated by
multiplying $x^{ex}$ by a blurring matrix $A\in \mathbb{R}^{N\times N}$. The
matrix $A$ is block Toeplitz with Toeplitz blocks and is implemented in the
function \texttt{blur} from \cite{H1}, which has two parameters, \texttt{band%
} and \texttt{sigma}; the former specifies the half-bandwidth of the
Toeplitz blocks and the latter the variance of the Gaussian point spread
function. We generate a blurred and noisy image $b\in \mathbb{R}^{N}$ by
adding a noise-vector $e\in \mathbb{R}^{N}$, so that $b=Ax^{ex}+e$. We
assume the blurring operator $A$ and the corrupted image $b$ to be available
while no information is given on the error $e$.

In the example, the original image is the \texttt{cameraman.tif} test image
from Matlab, a $256\times 256$, 8-bit gray-scale image, commonly used in
image deblurring experiments. The image is blurred with parameters \texttt{%
band}=7 and \texttt{sigma}=2. We further corrupt the blurred images with
0.1\% additive Gaussian noise. The blurred and noisy image is shown in the
center column of Figure \ref{F2}, the regularization operator is defined as
\begin{equation}  \label{ImRegOp}
L=I_{n}\otimes L_{1}+L_{1}\otimes I_{n}\in \mathbb{R}^{N\times N},
\end{equation}%
(cf. \cite[\S 5]{KHE}). The restored image is shown in the right column of
Figure \ref{F2} . The result has been obtained in $m=8$ iterations of the
Arnoldi algorithm, the CPU-time required for this experiment is around $1.2$
seconds. Many other experiments on image restoration have shown similar
performances.

\begin{figure}[]
\centering
\includegraphics[width=0.95\textwidth]{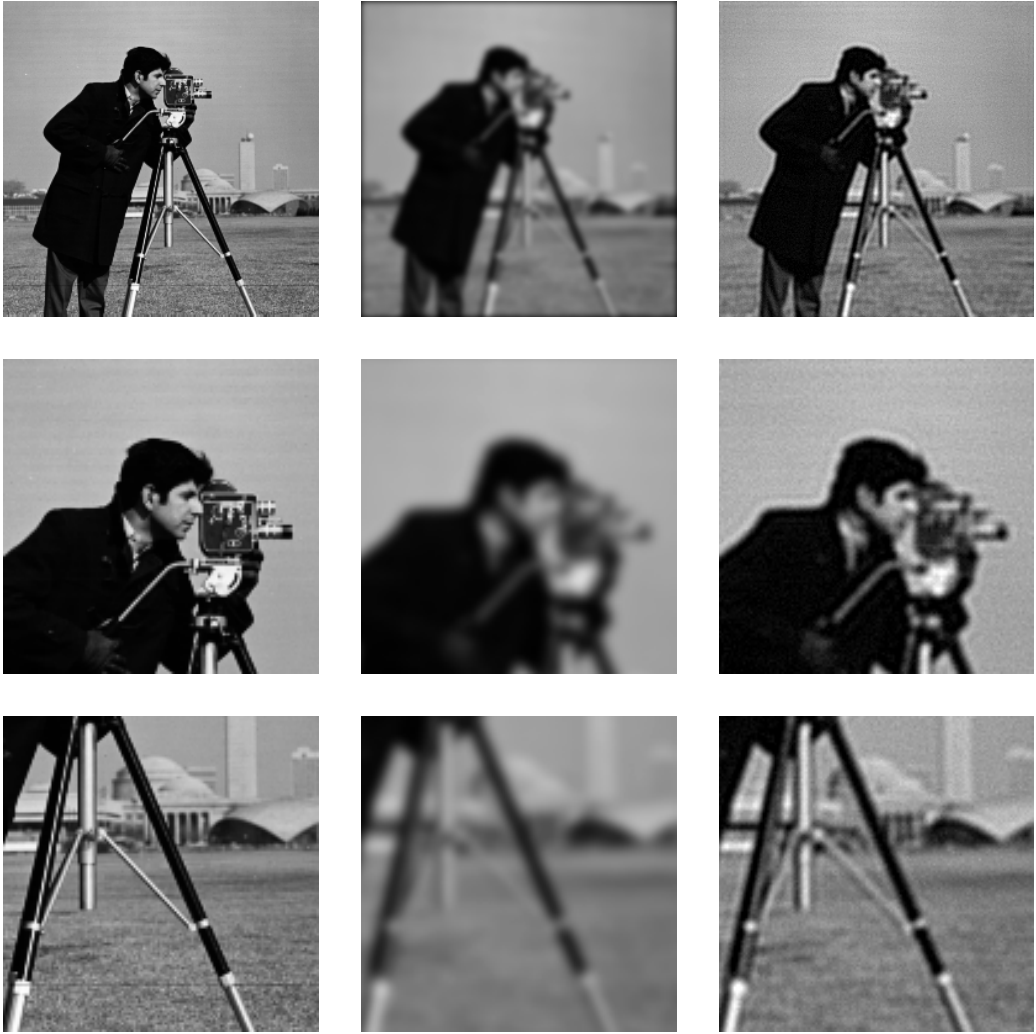}
\caption{Restoration of \texttt{cameraman.tif}. From left to right: original
image; blurred and noisy image with blur parameters \texttt{band}=7, \texttt{%
sigma}=2 and noise level $\protect\varepsilon =10^{-3} $; restored image.
From top to bottom: original-size image and two zooms.}
\label{F2}
\end{figure}

\subsection{Results for MRI Reconstruction}

The treatment of different kinds of medical images such as Magnetic
Resonance Imaging (MRI), Computed Tomography (CT), Position Emission
Tomography (PET), often requires the usage of image processing techniques to
remove various types of degradations such as noise, blur and contrast
imperfections. Our experiments focus on MRI medical image affected by
Gaussian blur and noise. Typically, when blur and noise affect the MRI
images, the visibility of small components in the image decreases and
therefore image deblurring techniques are extensively employed
to grant the image a sharper appearance.

In our test we blur a synthetic MRI $256\times 256$ image, with Gaussian
blur (\texttt{band}=9, \texttt{sigma}=2.5), and we add 10\% Gaussian white
noise, since the noise level of a real problem may be expected to be quite
high.

\begin{figure}[]
\centering
\includegraphics[width=0.95\textwidth]{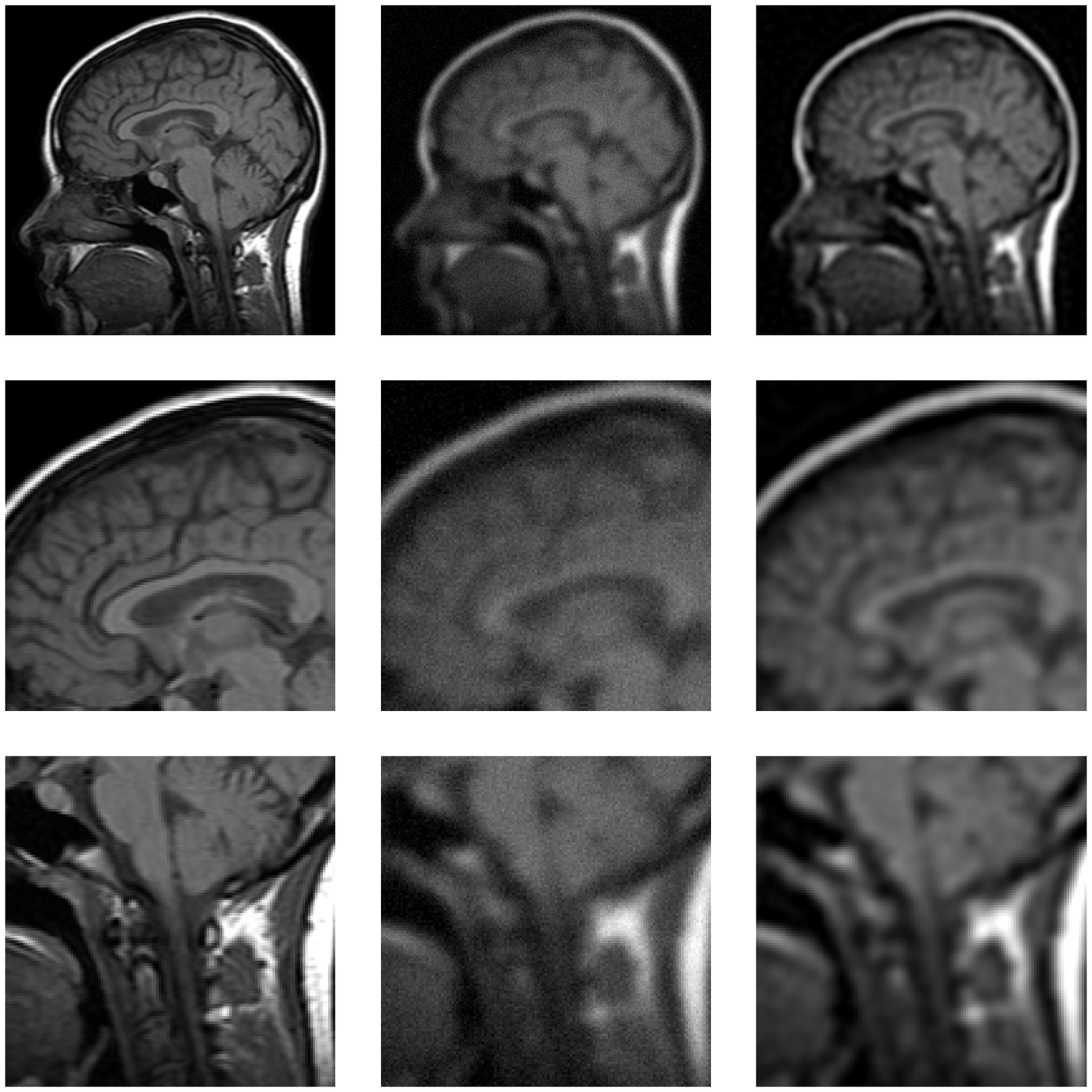}
\caption{Restoration of the test image \texttt{mri.png} image. From left to
right: original image; blurred and noisy image with noise level $\protect%
\varepsilon =10^{-1} $ and blur parameters \texttt{band}=9, \texttt{sigma}%
=2.5; restored image. From top to bottom original size image and two zooms.}
\label{F3}
\end{figure}

Figure \ref{F3} displays the performance of the algorithm. On the left
column we show the blur-free and noise-free image, on the middle column we
show the corrupted image, on the right column we show the restored image.The
regularization operator employed is again (\ref{ImRegOp}). The result has
been obtained in $m=5$ iterations of the algorithm, in around $0.7$ seconds.

\section{Conclusions}

In this paper we have proposed a very simple method to define the sequence
of regularization parameters for the Arnoldi Tikhonov method, in absence of
information on the percentage of error which affects the right hand side.
The numerical results have shown that this technique is rather stable, with
results comparable with the existing approaches (GCV, L-curve). We have used
the term "embedded" to describe this procedure since the construction of the
Krylov subspaces is used, at the same time, either as error estimator by means of
the GMRES residual or for the solution of (\ref{GenTikh}) with the AT method. We
remark that, in principle, the idea can be applied to any basic iterative
method able to approximate $\left\Vert e\right\Vert $ and, at the same time,
usable in connection with Tikhonov regularization (as for instance,
probably, the Lanczos bidiagonalization).

\section{Appendix}

While, in general, the SVD decomposition can be considered independent of
the Arnoldi process in absence of hypothesis on the starting vector $b$, the
following proposition states that, if the Discrete Picard Condition is
satisfied, then we are able to express a relation between $R(U_{m})$ (the
space generated by the columns of $U_{m}$, where $U_{m}\Sigma
_{m}V_{m}^{T}$ is the truncated SVD of $A$) and $\mathcal{K}_{m}(A,b)$. In
order to reduce the complexity of the notations, with respect to Section 4
here $b$ simply denotes the unperturbed right-hand side of the system.

\begin{proposition}
Assume that the singular values $A$ are of the type $\sigma
_{j}=O(e^{-\alpha j})$ ($\alpha >0$). Assume moreover that the Discrete
Picard Condition is satisfied. Let $\widetilde{V}_{m}:=\left[ \widetilde{v}%
_{0},...,\widetilde{v}_{m-1}\right] \in \mathbb{R}^{N\times m}$ where $%
\widetilde{v}_{k}:=A^{k}b/\left\Vert A^{k}b\right\Vert $. If $\widetilde{V}%
_{m}$ has full column rank, then there exist $C_{m}\in \mathbb{R}^{m\times
m} $ nonsingular, $E_{m},F_{m}\in \mathbb{R}^{N\times m}$, such that%
\begin{eqnarray}
\widetilde{V}_{m} &=&U_{m}C_{m}+E_{m},\quad \left\Vert E_{m}\right\Vert
=O(m^{1/2}\sigma _{m}),  \label{rr} \\
U_{m} &=&\widetilde{V}_{m}C_{m}^{-1}+F_{m},\quad \left\Vert F_{m}\Sigma
_{m}\right\Vert =O(m^{3/2}\sigma _{m}).  \label{rr2}
\end{eqnarray}
\end{proposition}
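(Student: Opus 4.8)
The plan is to expand each vector $A^k b$ in the SVD basis, separate the contribution of the first $m$ left singular vectors from the tail, and control the tail using the exponential decay of the singular values together with the Discrete Picard Condition. Write $b = \sum_{j=1}^N \beta_j v_j$ with $\beta_j = v_j^T b$; then $A^k b = \sum_{j=1}^N \sigma_j^k \beta_j u_j$, and by the Discrete Picard Condition $|\beta_j| \sim \sigma_j / \sigma_j = O(1)$ in the normalized sense actually $|u_j^T b| = |\sigma_j \beta_j| \sim \sigma_j$, so $|\beta_j| \sim 1$; more precisely one uses $|u_j^Tb|\sim\sigma_j$ directly, so that $A^kb = \sum_j \sigma_j^{k-1}(u_j^Tb)\,u_j$ with $|u_j^Tb|\sim\sigma_j$. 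The key point is that $\|A^k b\|$ is dominated by the first few terms, and in particular $\|A^k b\| = \Theta(\sigma_1^{k})$ for $k\ge 1$ (up to the Picard constants), so the normalization $\widetilde v_k = A^k b / \|A^kb\|$ is harmless. First I would split $\widetilde v_k = \sum_{j=1}^m (\text{coeff})\, u_j + \sum_{j=m+1}^N (\text{coeff})\, u_j$; the first sum defines the $k$-th column of $U_m C_m$, the second defines the $k$-th column of $E_m$.

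For the bound on $\|E_m\|$: the $j$-th tail coefficient of $\widetilde v_k$ is $\sigma_j^{k-1}(u_j^Tb)/\|A^kb\|$, and since $k\le m-1$ and $j\ge m+1$, using $|u_j^Tb|\sim\sigma_j = O(e^{-\alpha j})$ and $\|A^kb\| \ge c\,\sigma_m^{k}$ (a safe lower bound coming from the $m$-th Picard term, or better from the top term), each such coefficient is $O(e^{-\alpha(j-m)})$, which sums over $j>m$ to $O(\sigma_m)$-type bounds — actually one gets a geometric tail $\sum_{j>m} e^{-2\alpha(j-m)} = O(1)$, times $\sigma_m^2$ from the leading factor, giving an $O(\sigma_m)$ bound on each column's tail norm. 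Summing the $m$ columns (or using $\|E_m\|\le \|E_m\|_F \le \sqrt m \max_k \|E_m e_k\|$) yields $\|E_m\| = O(m^{1/2}\sigma_m)$, which is (\ref{rr}). The nonsingularity of $C_m$ follows because $\widetilde V_m$ has full column rank by hypothesis and $E_m$ is a small (norm $O(m^{1/2}\sigma_m)\to 0$) perturbation of $U_m C_m$, so $U_m C_m$ must have full column rank for $m$ large enough, hence $C_m$ is invertible (for small $m$ one absorbs constants).

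For (\ref{rr2}), I would invert the relation: from $\widetilde V_m = U_m C_m + E_m$ we get $\widetilde V_m C_m^{-1} = U_m + E_m C_m^{-1}$, so $F_m = -E_m C_m^{-1}$. Then $\|F_m \Sigma_m\| = \|E_m C_m^{-1}\Sigma_m\|$. The subtle point — and the main obstacle — is that $C_m^{-1}$ can be large (the columns of $\widetilde V_m$ are badly conditioned, being close to a Krylov-type basis), so the naive bound $\|F_m\Sigma_m\| \le \|E_m\|\,\|C_m^{-1}\|\,\|\Sigma_m\|$ is not obviously $O(m^{3/2}\sigma_m)$. The right approach is to not pass through $\|C_m^{-1}\|$ alone but to bound $\|C_m^{-1}\Sigma_m\|$ — equivalently to understand how $C_m$ acts relative to the singular-value scaling. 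One writes $C_m = U_m^T \widetilde V_m$ (since $E_m \perp R(U_m)$, i.e. $U_m^T E_m = 0$), so $C_{m}$ has $(j,k)$ entry $\sigma_j^{k-1}(u_j^Tb)/\|A^kb\|$ for $1\le j\le m$. This matrix is, up to the column scalings $\|A^kb\|^{-1}$ and the diagonal factors $u_j^Tb \sim \sigma_j$, a Vandermonde-like matrix in the nodes $\sigma_1,\dots,\sigma_m$. Factoring $C_m = D^{(u)} V^{(\sigma)} D^{(\text{col})}$ where $D^{(u)} = \mathrm{diag}(u_j^Tb)$, $V^{(\sigma)}_{jk} = \sigma_j^{k-1}$, and $D^{(\text{col})} = \mathrm{diag}(\|A^kb\|^{-1})$, one gets $C_m^{-1}\Sigma_m = (D^{(\text{col})})^{-1}(V^{(\sigma)})^{-1}(D^{(u)})^{-1}\Sigma_m$. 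Now $(D^{(u)})^{-1}\Sigma_m = \mathrm{diag}(\sigma_j/(u_j^Tb)) = \mathrm{diag}(O(1))$ by Picard; $(V^{(\sigma)})^{-1}$ has entries controllable by the classical Vandermonde-inverse formula in terms of the nodes $\sigma_j$ (the dominant behavior being governed by $\prod_{i\ne j}|\sigma_j-\sigma_i|^{-1}$, which because of the exponential separation of the $\sigma_j$ is $O(\sigma_j^{-(m-1)})$ up to polynomial factors); and $(D^{(\text{col})})^{-1} = \mathrm{diag}(\|A^kb\|) = \mathrm{diag}(O(\sigma_1^{k}))$. Multiplying $E_m$ (whose $j$-th tail entries carry the extra $\sigma_j^{k-1}$ with $j>m$) against this chain, the powers telescope: the worst row contributes $\sigma_{m+1}^{k-1}$ against $\sigma_m^{-(m-1)}$ against $\sigma_1^{k}$, and the exponential gap $\sigma_{m+1}/\sigma_m = O(e^{-\alpha})$ makes everything collapse to $O(m^{3/2}\sigma_m)$ after the bookkeeping of polynomial prefactors (one factor $m^{1/2}$ from $\|E_m\|$, one from the Vandermonde polynomial factors, one from summing $m$ terms). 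I expect the delicate accounting of these competing exponential powers — showing that $\|E_m C_m^{-1}\Sigma_m\|$ genuinely does not blow up despite $\|C_m^{-1}\|$ being exponentially large — to be where all the real work lies; the rest is the routine SVD-expansion-and-split described above.
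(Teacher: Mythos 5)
There is a genuine gap at the very first step, and it propagates through almost the whole argument: you expand $A^k b=\sum_j \sigma_j^k\beta_j u_j$, i.e.\ you use $A^k=U\Sigma^k V^T$. This identity is false for a general nonsymmetric $A=U\Sigma V^T$ (it requires $A$ to be normal with coinciding left and right singular bases, e.g.\ symmetric positive semidefinite), because $Au_j$ has no simple expression in the SVD of $A$. Consequently the claimed entries $\sigma_j^{k-1}(u_j^Tb)/\|A^kb\|$ of $C_m$ and of the tail of $\widetilde{v}_k$, the estimate $\|A^kb\|=\Theta(\sigma_1^k)$, the Vandermonde factorization $C_m=D^{(u)}V^{(\sigma)}D^{(\mathrm{col})}$ with nodes $\sigma_1,\dots,\sigma_m$, and the telescoping of exponential powers in your bound of $E_mC_m^{-1}\Sigma_m$ are all unavailable. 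Since the entire point of the Arnoldi--Tikhonov setting is to treat nonsymmetric $A$ (e.g.\ \texttt{baart}, \texttt{i\_laplace}), this cannot be dismissed as a special case.

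What survives of your proposal is exactly what does not use the power expansion, and it coincides with the paper's skeleton: $C_m=U_m^T\widetilde{V}_m$, $E_m=(I-U_mU_m^T)\widetilde{V}_m$, $F_m=-E_mC_m^{-1}$, the $m^{1/2}$ Frobenius-type count, and the correct insight that one must bound $C_m^{-1}\Sigma_m$ as a whole rather than $\|C_m^{-1}\|\,\|\Sigma_m\|$ separately. The paper fills the two holes differently. First, $|u_j^T\widetilde{v}_k|\sim\sigma_j$ for all $k$ is obtained from the one-step recursion $\widetilde{v}_k=\tfrac{\|A^{k-1}b\|}{\|A^kb\|}A\widetilde{v}_{k-1}$ together with $u_j^TA=\sigma_jv_j^T$: each application of $A$ contributes one factor $\sigma_j$ regardless of how the right singular vectors mix, and this alone gives (\ref{rr}). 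Second, for (\ref{rr2}) the paper bounds the entries of $(U_m^T\widetilde{V}_m)^{-1}\Sigma_m$ directly by Cramer's rule: every entry of row $j$ of $C_m$ is $\sim\sigma_j$, so each cofactor is $O\bigl(\prod_{i\ne j}\sigma_i\bigr)$ while $\det C_m\sim\prod_i\sigma_i$, making each entry of $C_m^{-1}\Sigma_m$ of size $O(1)$ and yielding $\|E_mC_m^{-1}\Sigma_m\|=O(m^{3/2}\sigma_m)$ with no Vandermonde structure needed. Finally, your nonsingularity argument for $C_m$ (``small perturbation, so invertible for $m$ large'') is circular for the fixed, small $m$ actually of interest; the paper deduces it from the full column rank of $\widetilde{V}_m$ by writing $C_m=U_m^TW_mS_m$ with $S_m$ upper triangular nonsingular and $\sigma_{\min}(U_m^TW_m)^2=1-\|(U_m^{\perp})^TW_m\|^2>0$.
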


\begin{proof}
Let $U_{m}^{\perp }:=\left[ u_{m+1},...,u_{N}\right] \in \mathbb{R}^{N\times
(N-m)}$. Defining $C_{m}:=U_{m}^{T}\widetilde{V}_{m}\in \mathbb{R}^{m\times
m}$ and $E_{m}:=U_{m}^{\perp }\left( U_{m}^{\perp }\right) ^{T}\widetilde{V}%
_{m}\in \mathbb{R}^{N\times m}$ we have $\widetilde{V}_{m}=U_{m}C_{m}+E_{m}$%
. Now we observe that for $0\leq k\leq m-1$%
\begin{equation}
\left\vert u_{j}^{T}\widetilde{v}_{k}\right\vert \sim \sigma _{j}.
\label{rd}
\end{equation}%
For $k=0$ the above relation is ensured by the Picard Condition, whereas for
$k\geq 1$ it holds since%
\begin{equation*}
\widetilde{v}_{k}=\frac{\left\Vert A^{k-1}b\right\Vert }{\left\Vert
A^{k}b\right\Vert }A\widetilde{v}_{k-1}.
\end{equation*}%
Therefore, using $\sigma _{j}=O(e^{-\alpha j})$, we immediately obtain%
\begin{equation}
\left\Vert E_{m}\right\Vert =\left\Vert \left( U_{m}^{\perp }\right) ^{T}%
\widetilde{V}_{m}\right\Vert =O(m^{1/2}\sigma _{m}),  \label{m1}
\end{equation}

We observe that the matrix $C_{m}$ can be written as%
\begin{equation*}
C_{m}=U_{m}^{T}W_{m}S_{m},
\end{equation*}%
where $S_{m}$ is upper triangular and nonsingular if $\widetilde{V}_{m}$ has
full rank. Now, from the relation \cite[\S 2.6.3]{Golub}%
\begin{equation*}
\sigma _{\min }(U_{m}^{T}W_{m})^{2}=1-\left\Vert \left( U_{m}^{\perp
}\right) ^{T}W_{m}\right\Vert ^{2},
\end{equation*}%
the quantity $\left\Vert \left( U_{m}^{\perp }\right) ^{T}W_{m}\right\Vert $%
, which express the distance between $R(U_{m})$ and $R(W_{m})$, is strictly
less than one if the Picard Condition is satisfied. Thus, by (\ref{rr}), we
can write%
\begin{equation}
U_{m}=\widetilde{V}_{m}C_{m}^{-1}-E_{m}C_{m}^{-1},  \label{um}
\end{equation}%
and since $E_{m}=U_{m}^{\perp }\left( U_{m}^{\perp }\right) ^{T}\widetilde{V}%
_{m}$ we have that
\begin{equation}
E_{m}C_{m}^{-1}=U_{m}^{\perp }\left( U_{m}^{\perp }\right) ^{T}\widetilde{V}%
_{m}\left( U_{m}^{T}\widetilde{V}_{m}\right) ^{-1}.  \label{em}
\end{equation}%
By (\ref{rd}), using the Cramer rule to compute $\left( U_{m}^{T}\widetilde{V%
}_{m}\right) ^{-1}\Sigma _{m}\in \mathbb{R}^{m\times m}$ we can see that
each element of this matrix is of the type $O(1)$, so that%
\begin{equation*}
\left\vert \left( U_{m}^{\perp }\right) ^{T}\widetilde{V}_{m}\left( U_{m}^{T}%
\widetilde{V}_{m}\right) ^{-1}\Sigma _{m}\right\vert \sim m\left(
\begin{array}{ccc}
\sigma _{m+1} & \cdots  & \sigma _{m+1} \\
\vdots  &  & \vdots  \\
\sigma _{N} & \cdots  & \sigma _{N}%
\end{array}%
\right) \in \mathbb{R}^{(N-m)\times m},
\end{equation*}%
and hence%
\begin{equation}
\left\Vert \left( U_{m}^{\perp }\right) ^{T}\widetilde{V}_{m}\left( U_{m}^{T}%
\widetilde{V}_{m}\right) ^{-1}\Sigma _{m}\right\Vert =O(m^{3/2}\sigma _{m}),
\label{mm}
\end{equation}%
using again $\sigma _{j}=O(e^{-\alpha j})$. Defining $F_{m}=-E_{m}C_{m}^{-1}$
we obtain (\ref{rr2}) by (\ref{um}), (\ref{em}) and (\ref{mm}).
\end{proof}

Thanks to the above Proposition, the following proof of Theorem \ref{th1}
stated in Section 4.1 is straightforward.

\noindent \textbf{Proof of Theorem }\ref{th1}. Let $A_{m}=U_{m}\Sigma
_{m}V_{m}^{T}$, and let $\Delta _{m}=A-A_{m}$.
By (\ref{Arnoldi})%
\begin{eqnarray*}
h_{m+1,m} &=&w_{m+1}^{T}Aw_{m} \\
&=&w_{m+1}^{T}\Delta _{m}w_{m}+w_{m+1}^{T}A_{m}w_{m} \\
&=&O(\sigma _{m+1})+w_{m+1}^{T}U_{m}\Sigma _{m}V_{m}^{T}w_{m},
\end{eqnarray*}%
since $\left\Vert \Delta _{m}\right\Vert =\sigma _{m+1}$. Therefore, using (%
\ref{rr2}) we obtain%
\begin{equation*}
h_{m+1,m}=O(\sigma _{m+1})+w_{m+1}^{T}(\widetilde{V}_{m}C_{m}^{-1}+F_{m})%
\Sigma _{m}V_{m}^{T}w_{m}.
\end{equation*}%
which concludes the proof, since $w_{m+1}^{T}\widetilde{V}_{m}=0$ and $%
\left\Vert F_{m}\Sigma _{m}\right\Vert =O(m^{3/2}\sigma _{m})$. \hspace{2cm}
$\square $

\begin{remark}
The hypothesis $\sigma _{j}=O(e^{-\alpha j})$ apparently limits the above
results to severely ill-conditioned problems. Actually, it is just used in (%
\ref{m1}) and (\ref{mm}) since, by the integral criterion,%
\begin{equation*}
\sum\nolimits_{j\geq m+1}\sigma _{j}=O(e^{-\alpha m})=O(\sigma _{m}).
\end{equation*}%
In this sense, the results can be extended to mildly ill-conditioned
problems, in which $\sigma _{j}=O(j^{-\alpha })$, $\alpha >1$. In this
situation we would have%
\begin{equation*}
\sum\nolimits_{j\geq m+1}\sigma _{j}=O(m^{1-\alpha }),
\end{equation*}%
so that, for $\alpha $ sufficiently large, (\ref{rr}), (\ref{rr2}) and the
results of Theorem \ref{th1} and Corollary \ref{cor1}, can be extended to
mildly ill-conditioned problems by replacing $\sigma _{m}$ with $%
O(m^{1-\alpha })$.
\end{remark}

\end{document}